\providecommand{\U}[1]{\protect\rule{.1in}{.1in}}
\theoremstyle{plain}
\newtheorem{corollary}[subsection]{Corollary}
\newtheorem{definition}[subsection]{Definition}
\newtheorem{lemma}[subsection]{Lemma}
\newtheorem{proposition}[subsection]{Proposition}
\numberwithin{equation}{subsection}
\theoremstyle{plain}
\newtheorem{cor}[subsubsection]{Corollary}
\newtheorem{lem}[subsubsection]{Lemma}
\theoremstyle{definition}
\theoremstyle{remark}
\theoremstyle{plain}
\newtheorem{teo}{Theorem}
\begin{document}
\title{\textbf{On Stabilization of }$E_{k}$\textbf{\ Chains}}
\author{Tuba \c{C}AKMAK}
\address{Atat\"{u}rk University, Faculty of Science, Department of Mathematics, 25240
Erzurum, Turkey }
\email{cakmaktuba@yahoo.com}
\thanks{This work was supported by T\"{U}B\.{I}TAK, the Scientific and Technological
Research Council of Turkey, through its programs 2214A and 2211E. }

\begin{abstract}
We study special subgroups of infinite groups that generalize double
centralizers. We analyze sufficient conditions for descending chains of such
subgroups to stop after finitely many steps. We discuss whether this
phenomenon can happen in the class of groups satisfying chain condition on centralizers.

\end{abstract}
\maketitle

\section{Introduction}

In modern infinite group theory, chain conditions have played an important
role. A natural chain condition is the one on centralizers. A group is said to
satisfy the descending chain condition on centralizers if every proper
descending chain of centralizers stabilizes after finitely many steps. Such
groups are denoted $\mathfrak{M}_{c}$-groups. By elementary properties of
centralizers, the descending chain condition on centralizers is equivalent to
the ascending one. Since many natural classes of groups such as linear groups
and finitely generated abelian-by-nilpotent groups enjoy this property, it has
been separately analyzed in such fundamental papers as \cite{Bryant 1} and
\cite{Bryant 3}.

\bigskip

$\mathfrak{M}_{c}$-groups are frequently encountered in model theory since the
class of stable groups, a class of groups of fundamental importance in model
theory, have this property. The model-theoretic analysis of mathematical
structures frequently uses the notion of first-order definability in the sense
of mathematical logic. When these structures are groups, a frequent question
is whether algebraic properties (e.g. nilpotency, solvability) of subgroups
are inherited by sufficiently small definable supergroups containing them
(\emph{envelopes}). In \cite{A-B}, Alt\i nel and Baginski showed that in an
$\mathfrak{M}_{c}$-group every nilpotent subgroup is contained in a definable
nilpotent subgroup of the same nilpotency class. This generalizes a similar
property of the Zariski closure of nilpotent subgroups of algebraic groups
over algebraically closed fields.

\bigskip

In their proof, Alt\i nel and Baginski start with a nilpotent subgroup $H$ of
an $\mathfrak{M}_{C}$-group $G$, and construct a descending chain of
supergroups of $H$ denoted $E_{k}(H)$ ($k\in\mathbb{N}$) reminiscent of double
centralizers. The definability of the envelope constructed depends intimately
on the ambient chain condition on centralizers as well as on the nature of the
subgroups $E_{k}(H)$, while its nilpotency is related to this very nature and
the nilpotency of $H$.

\bigskip

Our initial motivation was to measure to what extent these techniques would
help prove similar definability results for other classes of subgroups (e.g.
solvable subgroups of $\mathfrak{M}_{C}$-groups) as well as for classes of
ambient groups satisfying weaker chain conditions. Along the way, we proved
Theorem 1 (section 3.1) that states that the nilpotency of the envelope is a
consequence of the nilpotency of $H$, and from this it follows through a
simple induction argument that when $H$ is $k$-nilpotent, the descending chain
$(E_{i}(H))_{i}$ stabilizes after at most $k$ steps. These refinements of part
of Alt\i nel and Baginski's proof led us to investigating more this
stabilization property, especially under the assumption that the ambient group
is an $\mathfrak{M}_{C}$-group. Indeed, as proven in section 3.2, in groups
enjoying various topological properties and satisfying Noetherianity
conditions on closed subgroups, such as linear groups, the $E_{k}$-chains of
envelopes of arbitrary subgroups stabilize. We interpret these affirmative
answers to the stabilization problem of the $E_{k}$-chains as potentially
useful for our initial purposes.

\bigskip

Nevertheless, the stabilization of $E_{k}$-chains is a strong property. One
suspects that it is false in general, and understanding the conditions under
which a counterexample would arise is likely to yield further information. In
Section 4, where we construct an example in $Sym(\mathbb{N})$ with an infinite
$E_{k}$-chain of envelopes. Somewhat to our surprise, the counterexample to
the stabilization is rather involved. It should also be noted that
$Sym(\mathbb{N})$ is far from satisfying the descending chain condition on centralizers.

\bigskip

We have not been able to prove the stabilization property for envelopes of
arbitrary subgroups of $\mathfrak{M}_{C}$-groups. Potentially, this could have
led to further results similar to those in \cite{A-B}, and it remains to be
done. In a more positive vein, we are able to extend the results of Section
3.1 to hypercentral subgroups (of $\mathfrak{M}_{C}$-groups). These extensions
necessitate introducing transfinite versions of iterated centralizers as well
as of the $E_{k}$-chains, and are done in \cite{Cak}.

\bigskip

\section{Key Facts}

Our group-theoretic notation is standard. We write $H\leq G$ to denote that
$H$ is a subgroup of $G$ and $H\vartriangleleft G$ to denote $H$ is normal in
$G$. If $H\subseteq G$, then $\left\langle H\right\rangle $ denotes the
subgroup generated by $H$. For any subset $H$ of $G$, the centralizer of $H$
is $C_{G}\left(  H\right)  =\left\{  g\in G\mid\forall h\in H\text{
\ }gh=hg\right\}  $, while the normalizer of $H$ is $N_{G}\left(  H\right)
=\left\{  g\in G\mid\forall h\in H\text{ \ }g^{-1}hg\in H\right\}  $. Given
$g,h\in G$ and the commutator of these elements is $\left[  g,h\right]
:=g^{-1}h^{-1}gh$.

\bigskip

The key notion of this article is a special enveloping supergroup of a
subgroup $H$ of an arbitrary group $G$, that will be denoted $E_{k}(H)$ for
every $k\in\mathbb{N}$. To define it, we need to recall the notion of iterated
centralizer introduced in \cite{Bryant 1}:


\bigskip

\begin{definition}
[Bryant, 1979]Let $G$ be a group and $A$ any subset of $G$. Set $C_{G}%
^{0}\left(  A\right)  =1$ and for $k\geq1,$ the iterated centralizer of $A$ in
$G$ is
\[
C_{G}^{k}\left(  A\right)  =\left\{  x\in\underset{n<k}{\cap}N_{G}\left(
C_{G}^{n}\left(  A\right)  \right)  \mid\left[  x,A\right]  \subseteq
C_{G}^{k-1}\left(  A\right)  \right\}  .
\]

\end{definition}

\bigskip

One can show by induction that the iterated centralizers $C_{G}^{k}\left(
H\right)  $ form an ascending sequence: $1=C_{G}^{0}\left(  H\right)  \leq
C_{G}^{1}\left(  H\right)  \leq...\leq G.$

\bigskip

Some of the basic properties of iterated centralizers are stated in the
following lemma. In particular, one sees that the notion of iterated
centralizer generalizes the more commonly known notion of the $k$th center of
a group.

\bigskip

\begin{lemma}
[Bryant, 1979]\bigskip Let $G$ be a group and $H$ a subgroup and $k\geq0.$ For
the $kth$ iterated centralizer of $H$ in $G,$ the following relations hold:

\begin{enumerate}
\item[(i)] $C_{G}^{k}\left(  H\right)  \leq G$

\item[(ii)] $C_{G}^{k}\left(  H\right)  \cap H=Z_{k}\left(  H\right)  $

\item[(iii)] When $H=G,$ $C_{G}^{k}\left(  G\right)  =Z_{k}\left(  G\right)  $

\item[(iv)] If $H$ is a nilpotent subgroup of class $k,$ then $H\leq C_{G}%
^{k}\left(  H\right)  $.
\end{enumerate}
\end{lemma}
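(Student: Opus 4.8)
The plan is to treat the four parts in a dependent order: first establish (i), then use it together with an auxiliary normalization fact to prove (ii) by induction on $k$, and finally read off (iii) and (iv) as immediate consequences of (ii).

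For (i) I would argue by induction on $k$. The cases $k=0$ (where $C_G^0(H)=1$) and $k=1$ (where $\bigcap_{n<1}N_G(C_G^n(H))=N_G(1)=G$ and the condition $[x,H]\subseteq C_G^0(H)=1$ reduces to $x\in C_G(H)$) are clear. For the inductive step, assume each $C_G^n(H)$ with $n<k$ is a subgroup; then $M:=\bigcap_{n<k}N_G(C_G^n(H))$ is a subgroup, being an intersection of normalizers of subgroups, and in particular every element of $M$ normalizes $C:=C_G^{k-1}(H)$. It then remains to check that $S:=\{x\in M\mid [x,H]\subseteq C\}$ is closed under multiplication and inversion. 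Here the commutator identities $[xy,h]=[x,h]^y[y,h]$ and $[x^{-1},h]=\bigl([x,h]^{-1}\bigr)^{x^{-1}}$ do the work: if $x,y\in S$ and $h\in H$, then $[x,h],[y,h]\in C$, and since $y,x^{-1}\in M$ normalize $C$, the conjugates $[x,h]^y$ and $\bigl([x,h]^{-1}\bigr)^{x^{-1}}$ again lie in $C$; as $C$ is a subgroup, $[xy,h]\in C$ and $[x^{-1},h]\in C$, so $xy,x^{-1}\in S=C_G^k(H)$.

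The heart of the argument is (ii), which I would also prove by induction on $k$, the case $k=0$ being $1\cap H=1=Z_0(H)$. The inclusion $C_G^k(H)\cap H\subseteq Z_k(H)$ is direct: for $x\in C_G^k(H)\cap H$ one has $[x,H]\subseteq C_G^{k-1}(H)$, and since $[x,H]\subseteq H$, the induction hypothesis gives $[x,H]\subseteq C_G^{k-1}(H)\cap H=Z_{k-1}(H)$, which is exactly the condition placing $x$ in $Z_k(H)$. For the reverse inclusion I first record the key auxiliary fact, which I expect to be the main point to formulate cleanly: every element of $H$ normalizes each $C_G^n(H)$. Indeed, the case $n=0$ is trivial, and for $n\geq 1$, if $c\in C_G^n(H)$ and $x\in H$ then $[c,x]\in[c,H]\subseteq C_G^{n-1}(H)\subseteq C_G^n(H)$, whence $c^x=c[c,x]\in C_G^n(H)$; applying the same to $x^{-1}\in H$ gives $x\in N_G(C_G^n(H))$. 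Granting this, take $x\in Z_k(H)\subseteq H$: the normalizer conditions $x\in\bigcap_{n<k}N_G(C_G^n(H))$ then hold automatically, while $[x,H]\subseteq Z_{k-1}(H)=C_G^{k-1}(H)\cap H\subseteq C_G^{k-1}(H)$ verifies the commutator condition, so $x\in C_G^k(H)$.

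Finally, (iii) and (iv) fall out of (ii). Taking $H=G$ in (ii) gives $C_G^k(G)=C_G^k(G)\cap G=Z_k(G)$, since $C_G^k(G)\leq G$ by (i); this is (iii). For (iv), if $H$ is nilpotent of class $k$ then $Z_k(H)=H$, so (ii) yields $H=Z_k(H)=C_G^k(H)\cap H\subseteq C_G^k(H)$, that is $H\leq C_G^k(H)$.
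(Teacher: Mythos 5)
Your proof is correct. The paper itself gives no argument for this lemma (it is quoted from Bryant's 1979 paper), so there is nothing to compare against line by line; your write-up supplies a complete, self-contained proof, and the order of dependencies you chose --- (i) first, then the normalization fact, then (ii) by induction, with (iii) and (iv) as corollaries --- is the natural one and matches how such statements are standardly proved. The commutator identities $[xy,h]=[x,h]^{y}[y,h]$ and $[x^{-1},h]=\bigl([x,h]^{-1}\bigr)^{x^{-1}}$ are applied correctly, and the auxiliary claim that $H$ normalizes every $C_{G}^{n}(H)$ is indeed the point that must be isolated for the reverse inclusion in (ii). The only thing you lean on without proof is the containment $C_{G}^{n-1}(H)\leq C_{G}^{n}(H)$ (the ascending-chain property of iterated centralizers), but the paper states exactly this fact, also without proof, immediately before the lemma, so invoking it is consistent with the paper's presentation; if you wanted the argument fully self-contained you would add the short induction establishing that containment as well.
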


\bigskip

Let us recall that if $G$ is a group then $Z_{0}(G)=\{1\}$ and inductively,
for every $k\in\mathbb{N}$, $Z_{k+1}\left(  G\right)  =\left\{  g\in
G\mid\left[  g,G\right]  \subseteq Z_{k}\left(  G\right)  \right\}  $. The
series $(Z_{k}(G))_{k\in\mathbb{N}}$ is known as the upper central series; a
group is nilpotent if and only if there exists $k\in\mathbb{N}$ such that
$Z_{k}\left(  G\right)  =G$. The least such $k$ is the nilpotency class of $G$.


\bigskip


We now define the central notion and tool of this article:

\bigskip

\begin{definition}
[Alt\i nel-Baginski, 2014, Definition 3.5]\label{2.1}Let $G$ be a group and
$H$ a subgroup. For $k\in%
\mathbb{N}
,$ a sequence of subgroups $E_{k}\left(  H\right)  $ of $G$ is defined
\[
E_{k+1}(H)=\left\{  g\in E_{k}(H)\mid\left[  g,C_{E_{k}(H)}^{k+1}\left(
H\right)  \right]  \leq C_{E_{k}(H)}^{k}\left(  H\right)  \right\}
\]
where $E_{0}\left(  H\right)  =G.$
\end{definition}

\bigskip

By definition, these subgroups of $G$ form a descending sequence such as
\[
G=E_{0}\left(  H\right)  \geq E_{1}\left(  H\right)  \geq...\geq H.
\]

It follows easily from this definition that $E_{1}\left(  H\right)
=C_{G}\left(  C_{G}\left(  H\right)  \right)  .$

\bigskip


Before finishing this section, we will recall some basic facts from \cite{A-B}
and draw some corollaries.

\bigskip

\begin{lemma}
[Alt\i nel-Baginski, 2014, Lemma 2.5]\label{2.4}Let $A\leq B\leq C$ be groups
and suppose that for all $j\leq k$ we have $C_{C}^{j}\left(  A\right)  =$
$Z_{j}\left(  C\right)  .$ Then

\begin{enumerate}
\item[(i)] $C_{C}^{j}\left(  A\right)  =C_{C}^{j}\left(  B\right)
=Z_{j}\left(  C\right)  ,\forall j\leq k$

\item[(ii)] $C_{B}^{j}(A)=Z_{j}\left(  B\right)  =Z_{j}\left(  C\right)  \cap
B,\forall j\leq k$

\item[(iii)] $C_{B}^{k+1}(A)=C_{C}^{k+1}(A)\cap B,\forall j\leq k.$
\end{enumerate}
\end{lemma}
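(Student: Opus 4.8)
The plan is to prove all three parts by a single mechanism: the hypothesis forces every iterated centralizer appearing in the argument to coincide with a term of some upper central series, and such terms are characteristic subgroups. Consequently the intersection of normalizers $\bigcap_{n<j} N_{G}\!\left(C_{G}^{n}(\,\cdot\,)\right)$ occurring in Bryant's definition becomes the whole ambient group, so that $C_{G}^{j}$ reduces to the plain condition $[x,\,\cdot\,]\subseteq C_{G}^{j-1}(\,\cdot\,)$. Once these normalizer clauses disappear, the defining condition is antitone in the subset being centralized, and everything becomes an elementary inclusion chase. I expect this collapse of the normalizers to be the conceptual crux: without it the iterated centralizers need not behave monotonically, and the inclusions below would fail; the real work is the inductive bookkeeping that keeps the relevant subgroups characteristic so the clauses can be discarded.

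For part (i) I would induct on $j\leq k$. The case $j=0$ is trivial. Assuming $C_{C}^{n}(A)=C_{C}^{n}(B)=Z_{n}(C)$ for all $n\leq j$ with $j<k$, each $Z_{n}(C)$ is characteristic, hence normal, in $C$, so the normalizer intersection equals $C$ in the definitions of $C_{C}^{j+1}(A)$, $C_{C}^{j+1}(B)$ and $C_{C}^{j+1}(C)$. Using $A\leq B\leq C$ and the antitonicity of the simplified condition $[x,\,\cdot\,]\subseteq Z_{j}(C)$, I obtain the sandwich $Z_{j+1}(C)=C_{C}^{j+1}(C)\leq C_{C}^{j+1}(B)\leq C_{C}^{j+1}(A)$, where the outer term is $Z_{j+1}(C)$ by the basic property $C_{C}^{j+1}(C)=Z_{j+1}(C)$ of iterated centralizers and the right-hand end is $Z_{j+1}(C)$ by hypothesis. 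All three terms thus coincide, closing the induction.

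Part (ii) is again an induction on $j\leq k$, now carrying the two equalities $C_{B}^{j}(A)=Z_{j}(B)$ and $Z_{j}(B)=Z_{j}(C)\cap B$ simultaneously. The inductive hypothesis $C_{B}^{n}(A)=Z_{n}(B)$ for $n\leq j$ makes the normalizers in $B$ collapse, so $C_{B}^{j+1}(A)=\{x\in B\mid [x,A]\subseteq Z_{j}(B)\}$, while part (i) gives $Z_{j+1}(C)=\{x\in C\mid [x,A]\subseteq Z_{j}(C)\}$. The key elementary observation is that for $x\in B$ and $a\in A\subseteq B$ (respectively $b\in B$) the commutator $[x,a]$ (respectively $[x,b]$) lies in $B$; combined with $Z_{j}(B)=Z_{j}(C)\cap B$, this lets me pass freely between the conditions $[x,A]\subseteq Z_{j}(B)$ and $[x,A]\subseteq Z_{j}(C)$ for elements of $B$, yielding $C_{B}^{j+1}(A)=Z_{j+1}(C)\cap B$, and then $=Z_{j+1}(B)$ by closing up the inclusion $Z_{j+1}(C)\cap B\leq Z_{j+1}(B)\leq C_{B}^{j+1}(A)$.

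Finally, part (iii) follows the same pattern one level higher. Part (ii) guarantees that $C_{B}^{n}(A)=Z_{n}(B)$ is characteristic in $B$ for $n\leq k$ and that $C_{C}^{n}(A)=Z_{n}(C)$ for $n\leq k$, so both families of normalizers collapse and $C_{B}^{k+1}(A)=\{x\in B\mid [x,A]\subseteq Z_{k}(B)\}$, whereas $C_{C}^{k+1}(A)\cap B=\{x\in B\mid [x,A]\subseteq Z_{k}(C)\}$. The same commutator-containment argument together with $Z_{k}(B)=Z_{k}(C)\cap B$ identifies these two sets, giving $C_{B}^{k+1}(A)=C_{C}^{k+1}(A)\cap B$. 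The only genuine difficulty throughout is recognizing at the outset that the hypothesis propagates characteristicness up the chain so that the normalizer clauses may be dropped; after that each step is a routine inclusion chase.
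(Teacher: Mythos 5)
Your proof is correct. Note first that the paper itself does not prove this lemma: it is quoted verbatim from Alt\i nel--Baginski (2014, Lemma~2.5) as a ``Key Fact,'' so there is no in-paper argument to compare yours against. Judged on its own merits, your argument is sound and is essentially the standard one. The central observation --- that the hypothesis $C_{C}^{j}(A)=Z_{j}(C)$ (and, inductively, the analogous identifications in $B$) makes every subgroup appearing in a normalizer clause of Bryant's definition normal in the relevant ambient group, so that $\bigcap_{n<j}N(\cdot)$ collapses and $C^{j}$ reduces to the antitone condition $[x,\cdot]\subseteq C^{j-1}$ --- is exactly the right lever, and you apply it carefully: the sandwich $C_{C}^{j+1}(C)\leq C_{C}^{j+1}(B)\leq C_{C}^{j+1}(A)$ in (i) correctly uses $Z_{j+1}(C)=C_{C}^{j+1}(C)$ (Bryant's basic property, part (iii) of the paper's Lemma~2.2) on the left and the standing hypothesis on the right, which requires $j+1\leq k$ and is why (i) cannot be pushed to level $k+1$; and the passage between $[x,A]\subseteq Z_{j}(B)$ and $[x,A]\subseteq Z_{j}(C)$ for $x\in B$ in (ii) and (iii) correctly rests on $[x,A]\subseteq B$ together with $Z_{j}(B)=Z_{j}(C)\cap B$. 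The only cosmetic remark is that in (ii) what you invoke as ``part (i)'' is really the standing hypothesis plus the normalizer collapse in $C$; the content is the same.
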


\bigskip

\begin{corollary}
\label{2.5}Let $i,j\in%
\mathbb{N}
$ such that $i\leq j.$ Then $Z_{i}\left(  E_{i}\left(  H\right)  \right)  \leq
Z_{j}\left(  E_{j}\left(  H\right)  \right)  .$
\end{corollary}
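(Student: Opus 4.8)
The plan is to reduce the statement to its one-step form and to power everything by the identity $C^{j}_{E_{k}(H)}(H)=Z_{j}(E_{k}(H))$ for all $j\le k$. Write $E_{k}:=E_{k}(H)$. Since $i\le j$, it suffices to prove $Z_{k}(E_{k})\le Z_{k+1}(E_{k+1})$ for every $k$ and then telescope along $Z_{i}(E_{i})\le Z_{i+1}(E_{i+1})\le\cdots\le Z_{j}(E_{j})$. The whole argument rests on the auxiliary claim $(\ast)$: for every $k$ and every $j\le k$ one has $C^{j}_{E_{k}}(H)=Z_{j}(E_{k})$. I would prove $(\ast)$ by induction on $k$, the base case $k=0$ being $C^{0}_{G}(H)=1=Z_{0}(G)$.

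For the inductive step I would apply Lemma \ref{2.4} to the triple $H\le E_{k+1}\le E_{k}$: the inductive hypothesis $C^{j}_{E_{k}}(H)=Z_{j}(E_{k})$ for $j\le k$ is exactly the hypothesis of that lemma with $A=H$, $B=E_{k+1}$, $C=E_{k}$. Part (ii) then yields $C^{j}_{E_{k+1}}(H)=Z_{j}(E_{k+1})$ for all $j\le k$, and part (iii) reduces the remaining case $j=k+1$ to showing $C^{k+1}_{E_{k}}(H)\cap E_{k+1}=Z_{k+1}(E_{k+1})$. This last equality is the heart of the matter and the step I expect to be the main obstacle, for it is precisely where the defining condition of $E_{k+1}$ must be exploited. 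For the inclusion $\supseteq$, given $g\in Z_{k+1}(E_{k+1})$ (so $g\in E_{k+1}$) one has $[g,H]\subseteq[g,E_{k+1}]\subseteq Z_{k}(E_{k+1})\subseteq C^{k}_{E_{k}}(H)$ (using $H\le E_{k+1}$), while $g$ normalizes each $C^{n}_{E_{k}}(H)=Z_{n}(E_{k})$, $n\le k$, as these are characteristic in $E_{k}\ge E_{k+1}$; hence $g\in C^{k+1}_{E_{k}}(H)$. For $\subseteq$, given $g\in C^{k+1}_{E_{k}}(H)\cap E_{k+1}$ and any $h\in E_{k+1}$, the definition of $E_{k+1}$ gives $[h,C^{k+1}_{E_{k}}(H)]\subseteq C^{k}_{E_{k}}(H)$, so $[g,h]=[h,g]^{-1}\in C^{k}_{E_{k}}(H)\cap E_{k+1}=Z_{k}(E_{k+1})$, whence $g\in Z_{k+1}(E_{k+1})$. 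This establishes $(\ast)$.

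With $(\ast)$ in hand the corollary closes quickly. First I would check $Z_{k}(E_{k})=C^{k}_{E_{k}}(H)\le E_{k+1}$: for $g\in C^{k}_{E_{k}}(H)$ and $x\in C^{k+1}_{E_{k}}(H)$, the element $x$ normalizes $C^{k}_{E_{k}}(H)$ by the definition of the iterated centralizer, so $[g,x]=g^{-1}(x^{-1}gx)\in C^{k}_{E_{k}}(H)$; thus $[g,C^{k+1}_{E_{k}}(H)]\le C^{k}_{E_{k}}(H)$, which is exactly membership of $g$ in $E_{k+1}$. Consequently $Z_{k}(E_{k})=Z_{k}(E_{k})\cap E_{k+1}$, and Lemma \ref{2.4}(ii) (again for $H\le E_{k+1}\le E_{k}$) identifies this with $Z_{k}(E_{k+1})$. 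Since the upper central series is ascending, $Z_{k}(E_{k+1})\le Z_{k+1}(E_{k+1})$, giving $Z_{k}(E_{k})\le Z_{k+1}(E_{k+1})$ and, after telescoping, the assertion $Z_{i}(E_{i})\le Z_{j}(E_{j})$.
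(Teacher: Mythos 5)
Your argument is correct and follows essentially the same route as the paper: apply Lemma \ref{2.4}(ii) to the triple $H\le E_{j}(H)\le E_{i}(H)$ (you do it one step at a time and telescope), combined with the observation that $Z_{k}(E_{k})\le E_{k+1}$, which the paper asserts ``inductively'' and you verify explicitly. The only real difference is that you re-prove the identity $C^{j}_{E_{k}(H)}(H)=Z_{j}(E_{k}(H))$ from scratch, whereas the paper imports it as Lemma \ref{2.6} from Alt\i nel--Baginski; your derivation of it is sound, so this is added completeness rather than a divergence.
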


\begin{proof}
Since the iterated centers form an ascending chain $Z_{i}\left(  E_{j}\left(
H\right)  \right)  \leq Z_{j}\left(  E_{j}\left(  H\right)  \right)  $ and
inductively $Z_{i}\left(  E_{i}\left(  H\right)  \right)  \leq E_{j}\left(
H\right)  $, applying the Lemma \ref{2.4} (ii) to $H\leq E_{j}\left(
H\right)  \leq E_{i}\left(  H\right)  $ subgroups,
\[
Z_{j}\left(  E_{j}\left(  H\right)  \right)  \geq Z_{i}\left(  E_{j}\left(
H\right)  \right)  =Z_{i}\left(  E_{i}\left(  H\right)  \right)  \cap
E_{j}\left(  H\right)  =Z_{i}\left(  E_{i}\left(  H\right)  \right)
\]
is obtained.
\end{proof}

\bigskip

\begin{lemma}
[Alt\i nel-Baginski, 2014]\label{2.6}Let $G$ be an arbitrary group and $H$ a
subgroup of $G.$ Then%
\[
C_{E_{k}(H)}^{j}(H)=Z_{j}\left(  E_{k}(H)\right)
\]
for all $j\leq k$.
\end{lemma}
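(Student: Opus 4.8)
The plan is to induct on $k$, taking as the induction hypothesis the full statement $C_{E_k(H)}^j(H)=Z_j(E_k(H))$ for all $j\le k$. The base case $k=0$ is immediate, since $E_0(H)=G$ and the only relevant index is $j=0$, where $C_G^0(H)=1=Z_0(G)$. For the inductive step the crucial observation is that the induction hypothesis is exactly the hypothesis required to apply Lemma \ref{2.4} to the chain $H\le E_{k+1}(H)\le E_k(H)$ (with $A=H$, $B=E_{k+1}(H)$, $C=E_k(H)$): we are assuming $C_{E_k(H)}^j(H)=Z_j(E_k(H))$ for all $j\le k$.

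Applying Lemma \ref{2.4}(ii) then immediately yields $C_{E_{k+1}(H)}^j(H)=Z_j(E_{k+1}(H))$ for all $j\le k$, which settles every index except $j=k+1$. For that remaining index I would first record two explicit descriptions. Since each $C_{E_k(H)}^n(H)=Z_n(E_k(H))$ is characteristic in $E_k(H)$ for $n\le k$, the normalizer clause in the definition of the iterated centralizer is vacuous, so $C_{E_k(H)}^{k+1}(H)=\{x\in E_k(H)\mid [x,H]\le Z_k(E_k(H))\}$; combined with Lemma \ref{2.4}(iii) this gives $C_{E_{k+1}(H)}^{k+1}(H)=\{g\in E_{k+1}(H)\mid [g,H]\le Z_k(E_k(H))\}$. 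On the other hand, using $Z_k(E_{k+1}(H))=Z_k(E_k(H))\cap E_{k+1}(H)$ from Lemma \ref{2.4}(ii) together with the automatic inclusion $[g,E_{k+1}(H)]\le E_{k+1}(H)$, one gets $Z_{k+1}(E_{k+1}(H))=\{g\in E_{k+1}(H)\mid [g,E_{k+1}(H)]\le Z_k(E_k(H))\}$. The inclusion $Z_{k+1}(E_{k+1}(H))\subseteq C_{E_{k+1}(H)}^{k+1}(H)$ is then trivial, as $H\le E_{k+1}(H)$.

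The reverse inclusion is the heart of the matter, and is where the defining property of $E_{k+1}(H)$ must be exploited symmetrically. Given $g\in E_{k+1}(H)$ with $[g,H]\le Z_k(E_k(H))$, the first description above shows $g\in C_{E_k(H)}^{k+1}(H)$. Now for an arbitrary $e\in E_{k+1}(H)$, the definition of $E_{k+1}(H)$ says precisely that $[e,C_{E_k(H)}^{k+1}(H)]\le C_{E_k(H)}^k(H)=Z_k(E_k(H))$; applying this to the element $g$ gives $[e,g]\in Z_k(E_k(H))$, and since $Z_k(E_k(H))$ is a subgroup and $[g,e]=[e,g]^{-1}$, also $[g,e]\in Z_k(E_k(H))$. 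As $e$ was arbitrary, this yields $[g,E_{k+1}(H)]\le Z_k(E_k(H))$, i.e. $g\in Z_{k+1}(E_{k+1}(H))$, which completes the inductive step.

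I expect the main obstacle to be exactly this final interchange. The naive hope that $E_{k+1}(H)\le C_{E_k(H)}^{k+1}(H)$ (which would let one centralize all of $E_{k+1}(H)$ at once) fails in general, since it would force $H$ to be nilpotent of class at most $k+1$. The resolution is to insert only the specific element $g$ into $C_{E_k(H)}^{k+1}(H)$ and then let the membership condition defining $E_{k+1}(H)$ supply the commutation in the other variable; identifying this asymmetry as the right move is the one genuinely non-routine idea in the argument.
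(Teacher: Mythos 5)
Your proof is correct. Note, however, that the paper does not prove this statement at all: Lemma \ref{2.6} is quoted from Alt\i nel--Baginski \cite{A-B} without proof, so there is no in-paper argument to compare against. Your reconstruction is the natural one and is fully consistent with the machinery the paper does develop: the induction hypothesis is exactly the hypothesis of Lemma \ref{2.4} for the triple $H\le E_{k+1}(H)\le E_k(H)$, part (ii) disposes of all $j\le k$, and your ``first description'' of $C_{E_k(H)}^{k+1}(H)$ (normalizer clause vacuous because the lower iterated centralizers are iterated centers, hence normal) is precisely the corollary the paper records immediately after Lemma \ref{2.6}. The one genuinely non-formal step --- feeding the specific element $g\in C_{E_k(H)}^{k+1}(H)$ into the defining condition of $E_{k+1}(H)$ applied to an arbitrary $e\in E_{k+1}(H)$, then inverting the commutator to get $[g,E_{k+1}(H)]\le Z_k(E_k(H))$ --- is handled correctly, and your closing remark about why the cruder inclusion $E_{k+1}(H)\le C_{E_k(H)}^{k+1}(H)$ cannot be used is accurate. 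No gaps.
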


\bigskip

A practical conclusion of this lemma is the following corollary. Throughout
the paper, it will be used without mention.

\bigskip

\begin{corollary}
Let $G$ be a group and $H\leq G.$ Then the iterated centralizer is
\[
C_{E_{k}}^{i+1}\left(  H\right)  =\left\{  x\in E_{k}\mid\left[  x,H\right]
\subseteq Z_{i}\left(  E_{k}\left(  H\right)  \right)  \right\}
\]
for all $i\leq k.$
\end{corollary}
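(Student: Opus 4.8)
The plan is to apply the Bryant definition of the iterated centralizer directly, taking the ambient group to be $E_{k}:=E_{k}(H)$, the subset to be $H$, and the exponent to be $i+1$. Writing out that definition gives
\[
C_{E_{k}}^{i+1}(H)=\Bigl\{x\in\bigcap_{n<i+1}N_{E_{k}}\bigl(C_{E_{k}}^{n}(H)\bigr)\ \Big|\ [x,H]\subseteq C_{E_{k}}^{i}(H)\Bigr\},
\]
so the corollary reduces to two observations: that the commutator condition on the right can be replaced by $[x,H]\subseteq Z_{i}(E_{k})$, and that the normalizer intersection is in fact all of $E_{k}$.

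For the first observation, note that every exponent occurring in the display is at most $i$, and $i\le k$ by hypothesis; hence Lemma \ref{2.6} applies to each of them and yields $C_{E_{k}}^{n}(H)=Z_{n}(E_{k})$ for all $n\le i$. In particular $C_{E_{k}}^{i}(H)=Z_{i}(E_{k})$, which turns the defining commutator condition into exactly the one in the statement. It is worth emphasizing that we never need to invoke Lemma \ref{2.6} at the exponent $k+1$ even when $i=k$, so its hypothesis $j\le k$ is respected throughout.

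For the second observation, I would use that each $Z_{n}(E_{k})$ is a term of the upper central series of $E_{k}$ and is therefore a characteristic, hence normal, subgroup of $E_{k}$. Consequently $N_{E_{k}}\bigl(C_{E_{k}}^{n}(H)\bigr)=N_{E_{k}}\bigl(Z_{n}(E_{k})\bigr)=E_{k}$ for every $n\le i$, so the intersection $\bigcap_{n<i+1}N_{E_{k}}\bigl(C_{E_{k}}^{n}(H)\bigr)$ collapses to $E_{k}$. Substituting both observations into the displayed description of $C_{E_{k}}^{i+1}(H)$ yields the claimed equality.

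There is essentially no serious obstacle here; the content is entirely bookkeeping. The one point that deserves care is checking that all exponents appearing stay within the range $\le k$ where Lemma \ref{2.6} is valid, and recognizing that the vanishing of the normalizer conditions is precisely the normality (indeed characteristicity) of the upper-central terms. Once these are noted, the identity falls out by direct substitution.
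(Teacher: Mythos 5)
Your proof is correct and follows essentially the same route as the paper: invoke Lemma \ref{2.6} to identify $C_{E_{k}}^{n}(H)$ with $Z_{n}(E_{k})$ for $n\le i\le k$, and observe that the normalizer conditions then become vacuous. You are slightly more explicit than the paper in justifying the collapse of the normalizer intersection (via normality of the upper-central terms), which is a welcome clarification but not a different argument.
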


\begin{proof}
By definition $C_{E_{k}}^{i+1}\left(  H\right)  =\left\{  x\in\underset{j\leq
i}{\cap}N_{E_{k}}\left(  C_{E_{k}}^{j}\left(  H\right)  \right)  \mid\left[
x,H\right]  \subseteq C_{E_{k}}^{i}\left(  H\right)  \right\}  .$ By the
previous lemma $C_{E_{k}(H)}^{i}(H)=Z_{i}\left(  E_{k}(H)\right)  $ for all
$i\leq k$\textbf{\ }
\begin{align*}
C_{E_{k}}^{i+1}\left(  H\right)   & =\left\{  x\in\underset{j\leq i}{\cap
}N_{E_{k}}\left(  C_{E_{k}}^{j}\left(  H\right)  \right)  \mid\left[
x,H\right]  \subseteq C_{E_{k}}^{i}\left(  H\right)  \right\} \\
& =\left\{  x\in\underset{j\leq i}{\cap}N_{E_{k}}\left(  Z_{j}\left(
E_{k}(H)\right)  \right)  \mid\left[  x,H\right]  \subseteq Z_{i}\left(
E_{k}\left(  H\right)  \right)  \right\} \\
& =\left\{  x\in E_{k}\mid\left[  x,H\right]  \subseteq Z_{i}\left(
E_{k}\left(  H\right)  \right)  \right\}
\end{align*}
is obtained.
\end{proof}

\section{Affirmative Answers}

In this section, we show that under some conditions the chains of $E_{k}%
$-envelopes of some specific subgroups stabilize. First, we shall show that
the chain of $E_{k}$-envelopes of any nilpotent subgroup of an arbitrary group
$G$ stabilizes. Then, we will analyze topological conditions that yield the
stabilization property.

\bigskip

\subsection{Nilpotent Subgroups\label{3.1}}

In this subsection, unless otherwise mentioned, $H$ will stand for a nilpotent
subgroup of a fixed arbitrary ambient group $G$. For simplicity, we will
denote the various $E_{k}\left(  H\right)  $ by $E_{k}$.

\bigskip

\begin{lem}
\label{3.1.1}Let $G$ be a group and $H$ an abelian subgroup of $G.$Then
$C_{G}\left(  C_{G}\left(  H\right)  \right)  $ is abelian.
\end{lem}

\begin{proof}
Since $H$ is abelian, $H\leq C_{G}\left(  H\right)  $, and then $C_{G}\left(
H\right)  \geq C_{G}\left(  C_{G}\left(  H\right)  \right)  .$ Thus
\[
Z\left(  C_{G}\left(  H\right)  \right)  =C_{G}\left(  C_{G}\left(  H\right)
\right)  \cap C_{G}\left(  H\right)  =C_{G}\left(  C_{G}\left(  H\right)
\right)  .
\]

\end{proof}

\bigskip

Using this lemma we can prove the first main result of this subsection:

\bigskip

\begin{teo}
\label{3.1.2}Let $G$ be a group and $H\leq G.$ If $H$ is $k$-nilpotent
subgroup, then the envelope $E_{k}$ is also $k$-nilpotent.
\end{teo}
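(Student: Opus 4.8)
The plan is to induct on the nilpotency class $k$. The base case $k=1$ is exactly Lemma \ref{3.1.1}: when $H$ is abelian, $E_{1}=E_{1}(H)=C_{G}(C_{G}(H))$ is abelian, i.e.\ $1$-nilpotent. For the inductive step I would pass to a central quotient that lowers the class of $H$ by one and realizes $E_{k}(H)$ as an envelope of index $k-1$ in the quotient.

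Concretely, set $Z:=Z(E_{1}(H))$ and form $\overline{G}:=E_{1}(H)/Z$ with $\overline{H}:=HZ/Z$. By Lemma \ref{2.6} (with $j=1$) one has $Z=C_{E_{1}(H)}(H)$, so $H\cap Z=Z(H)$ and hence $\overline{H}\cong H/Z(H)$ is nilpotent of class $k-1$. The first thing I would establish is that the whole tail of the chain lies above $Z$ and shares its center: since a central element of $E_{1}(H)$ satisfies every defining commutator relation trivially, an easy induction gives $Z\leq E_{i}(H)$ for all $i\geq 1$, and then Lemma \ref{2.6} yields
\[
Z(E_{i}(H))=C_{E_{i}(H)}(H)=C_{E_{1}(H)}(H)\cap E_{i}(H)=Z\qquad(i\geq 1).
\]
Thus every $E_{i}(H)$ with $i\geq1$ has the same center $Z$, which is what makes the central quotient behave cleanly.

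The heart of the argument is a shift lemma: the $E$-chain of $\overline{H}$ in $\overline{G}$ satisfies $E_{i-1}(\overline{H})=E_{i}(H)/Z$ for every $i\geq 1$. I would prove this by induction on $i$, the base case $i=1$ being the identity $\overline{G}=E_{0}(\overline{H})$. The inductive step reduces to transferring the defining relation of $E_{i+1}(H)$ through the quotient, and for this I need the iterated-centralizer identity $C_{E_{i}(H)/Z}^{\,j}(\overline{H})=C_{E_{i}(H)}^{\,j+1}(H)/Z$ for $j\leq i$. This last identity I would prove by a separate induction on $j$: Lemma \ref{2.6} identifies each lower iterated centralizer $C_{E_{i}(H)}^{n}(H)$ with the normal subgroup $Z_{n}(E_{i}(H))$, so all the normalizer intersections in the definition of the iterated centralizer become vacuous on both sides, and since $Z=Z_{1}(E_{i}(H))\leq Z_{n}(E_{i}(H))$ the commutator condition $[\overline{x},\overline{H}]\subseteq C^{j}_{E_{i}(H)/Z}(\overline{H})$ pulls back cleanly to $[x,H]\subseteq C_{E_{i}(H)}^{\,j+1}(H)$.

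Granting the shift lemma, the proof closes quickly: taking $i=k$ gives $E_{k}(H)/Z=E_{k-1}(\overline{H})$, and applying the induction hypothesis in the ambient group $\overline{G}$ to the class-$(k-1)$ subgroup $\overline{H}$ shows $E_{k-1}(\overline{H})$ is $(k-1)$-nilpotent. Hence $E_{k}(H)/Z$ is $(k-1)$-nilpotent; since $Z=Z(E_{k}(H))$, the standard fact that $L/Z(L)$ of class $c$ forces $L$ of class at most $c+1$ shows $E_{k}(H)$ is $k$-nilpotent. I expect the shift lemma to be the main obstacle: the delicate points are verifying that the center stabilizes to the single subgroup $Z$ along the whole chain and that the normalizer conditions in the iterated centralizers collapse, so that passing to $\overline{G}$ lowers every index by exactly one.
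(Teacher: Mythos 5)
Your argument is correct, but it takes a genuinely different route from the paper's. The paper does not induct on $k$: it quotients once, at the top, by $Z_{k-1}(E_{k-1})$, uses Lemma \ref{2.4} and Lemma \ref{2.6} to identify the images of $H$ and $E_{k}$ with $H/Z_{k-1}(H)$ and $E_{k}/Z_{k-1}(E_{k})$ respectively, observes that the image of $E_{k}$ is by definition the double centralizer of the abelian group $H/Z_{k-1}(H)$ inside $E_{k-1}/Z_{k-1}(E_{k-1})$, and concludes from Lemma \ref{3.1.1} that $E_{k}/Z_{k-1}(E_{k})$ is abelian, hence $E_{k}$ is $k$-nilpotent. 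You instead quotient at the bottom, by $Z=Z(E_{1}(H))$, prove the shift identity $E_{i-1}(HZ/Z)=E_{i}(H)/Z$ for the whole chain, and recurse on the class, with Lemma \ref{3.1.1} as the base case. Your key steps all check out: $Z\leq E_{i}(H)$ for every $i$ because a central element of $E_{1}(H)$ satisfies each defining commutator condition trivially; hence $Z(E_{i}(H))=C_{E_{i}(H)}(H)=Z\cap E_{i}(H)=Z$; and in your iterated-centralizer identity both sides compute to $Z_{j+1}(E_{i}(H))/Z$ for $j\leq i-1$ (Lemma \ref{2.6} on each side plus the standard identity $Z_{j}(L/Z(L))=Z_{j+1}(L)/Z(L)$), while the top case $j=i$ follows from the correspondence theorem since $Z\leq Z_{i}(E_{i}(H))$ and these terms are normal, which also kills all the normalizer conditions. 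What your route buys is a stronger structural statement --- modulo the common center, the entire $E$-chain of $H$ is the $E$-chain of $H/Z(H)$ shifted by one --- at the cost of having to prove that shift lemma in full; the paper's one-shot reduction to the abelian case is shorter because it only needs the single identification of $E_{k}$ modulo $Z_{k-1}(E_{k-1})$ as a double centralizer, though that identification is itself the least-documented step of the paper's proof and is essentially the $i=k$ instance of your lemma.
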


\begin{proof}
From the second isomorphism theorem it can be written
\begin{equation}
HZ_{k-1}\left(  E_{k-1}\right)  \diagup Z_{k-1}\left(  E_{k-1}\right)  \cong
H\diagup H\cap Z_{k-1}\left(  E_{k-1}\right)  .\label{AC1}%
\end{equation}
Considering Lemma \ref{2.4}, we have%
\begin{equation}
Z_{k-1}\left(  H\right)  =Z_{k-1}\left(  E_{k-1}\right)  \cap H.\label{AC2}%
\end{equation}
If equation \ref{AC2} is used in equation \ref{AC1}, then
\[
HZ_{k-1}\left(  E_{k-1}\right)  \diagup Z_{k-1}\left(  E_{k-1}\right)  \cong
H\diagup Z_{k-1}\left(  H\right)  .
\]
Since $H$ is a $k$-nilpotent subgroup, $H\diagup Z_{k-1}\left(  H\right)  $ is
abelian. In a similar way, from the second isomorphism theorem we have
\begin{align*}
E_{k}Z_{k-1}\left(  E_{k-1}\right)  \diagup Z_{k-1}\left(  E_{k-1}\right)   &
\cong E_{k}\diagup E_{k}\cap Z_{k-1}\left(  E_{k-1}\right) \\
& =E_{k}\diagup Z_{k-1}\left(  E_{k}\right)  .
\end{align*}
From the definitions of $E_{k}$ and $C_{E_{k-1}}^{k}\left(  H\right)  $ we
write%
\[
E_{k}Z_{k-1}\left(  E_{k-1}\right)  \diagup Z_{k-1}\left(  E_{k-1}\right)
=C_{E_{k-1}\diagup Z_{k-1}\left(  E_{k-1}\right)  }\left(  C_{E_{k-1}\diagup
Z_{k-1}\left(  E_{k-1}\right)  }\left(  H\diagup Z_{k-1}\left(  H\right)
\right)  \right)  .
\]
Since $H\diagup Z_{k-1}\left(  H\right)  $ is abelian, the double centralizer
of this group is also abelian by Lemma \ref{3.1.1}. Then,
\begin{align*}
E_{k}Z_{k-1}\left(  E_{k-1}\right)  \diagup Z_{k-1}\left(  E_{k-1}\right)   &
=C_{E_{k-1}\diagup Z_{k-1}\left(  E_{k-1}\right)  }\left(  C_{E_{k-1}\diagup
Z_{k-1}\left(  E_{k-1}\right)  }\left(  H\diagup Z_{k-1}\left(  H\right)
\right)  \right) \\
& =E_{k}\diagup Z_{k-1}\left(  E_{k}\right)  .
\end{align*}
It means $E_{k}\diagup Z_{k-1}\left(  E_{k}\right)  $ is abelian. Thus $E_{k}$
is a $k$-nilpotent group.
\end{proof}

\bigskip

As a result of\textbf{\ }Theorem \ref{3.1.2}\textbf{\ }we verify the following
corollary which guarantees that the descending chain of envelopes stabilizes
at most at step the nilpotency class of $H.$

\begin{cor}
Let $G$ be a group and $H\leq G.$ If $H$ is $k$-nilpotent subgroup,
$E_{l}=E_{k}$ for all $l\geq k$ natural numbers.
\end{cor}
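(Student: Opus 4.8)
The plan is to argue by induction on $l\geq k$ that $E_{l}=E_{k}$, the real content being the single step $E_{k+1}=E_{k}$; the same computation then propagates to all larger indices. Throughout I would lean on Theorem \ref{3.1.2}, which tells us that $E_{k}$ is itself $k$-nilpotent, i.e. $Z_{k}\left(  E_{k}\right)  =E_{k}$.

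First I would pin down the iterated centralizers of $H$ inside $E_{k}$. By Lemma \ref{2.6} we have $C_{E_{k}}^{j}\left(  H\right)  =Z_{j}\left(  E_{k}\right)  $ for every $j\leq k$, and since $E_{k}$ is $k$-nilpotent the top of this range already fills the whole group: $C_{E_{k}}^{k}\left(  H\right)  =Z_{k}\left(  E_{k}\right)  =E_{k}$. The key observation is that once an iterated centralizer reaches the entire group it can neither grow nor shrink. Each $C_{E_{k}}^{n}\left(  H\right)  $ with $n\leq k$ coincides with a term $Z_{n}\left(  E_{k}\right)  $ of the upper central series and is therefore normal (indeed characteristic) in $E_{k}$, so all the normalizer constraints in Bryant's definition of $C_{E_{k}}^{k+1}\left(  H\right)  $ are vacuous, while the commutator condition $\left[  x,H\right]  \subseteq C_{E_{k}}^{k}\left(  H\right)  =E_{k}$ holds for every $x\in E_{k}$. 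Hence $C_{E_{k}}^{k+1}\left(  H\right)  =E_{k}$, and a routine induction on $j$ gives $C_{E_{k}}^{j}\left(  H\right)  =E_{k}$ for all $j\geq k$.

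With this in hand the defining relation for the next envelope collapses:
\[
E_{k+1}=\{g\in E_{k}\mid[g,C_{E_{k}}^{k+1}(H)]\leq C_{E_{k}}^{k}(H)\}=\{g\in E_{k}\mid[g,E_{k}]\leq E_{k}\}=E_{k},
\]
since any commutator of elements of $E_{k}$ already lies in $E_{k}$. This establishes $E_{k+1}=E_{k}$. For the inductive step at a general $l\geq k$, I would assume $E_{l}=E_{k}$; then $E_{l}$ is again $k$-nilpotent, and the computation above, now carried out with ambient group $E_{l}$, shows $C_{E_{l}}^{l}\left(  H\right)  =C_{E_{l}}^{l+1}\left(  H\right)  =E_{l}$, both indices being $\geq k$. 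Feeding this into the definition of $E_{l+1}$ yields $E_{l+1}=\{g\in E_{l}\mid[g,E_{l}]\leq E_{l}\}=E_{l}$, which closes the induction.

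I do not anticipate a serious obstacle: the whole argument rests on the clean fact, supplied by Theorem \ref{3.1.2} together with Lemma \ref{2.6}, that the relevant iterated centralizers have already saturated to the full group before the index of the envelope is reached, so that the commutator test defining the next envelope is passed automatically by every element. The only point demanding a little care is to verify that an iterated centralizer, once equal to the whole group, remains equal to the whole group at all higher indices — that is, to check that the normalizer conditions in Bryant's definition stay satisfied, which is immediate from the normality of the upper central terms.
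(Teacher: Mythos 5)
Your proposal is correct and follows essentially the same route as the paper: both invoke Theorem \ref{3.1.2} to get $k$-nilpotency of $E_{k}$, use Lemma \ref{2.6} to identify $C_{E_{k}}^{k}\left(H\right)=Z_{k}\left(E_{k}\right)=E_{k}$, deduce that the iterated centralizers saturate to all of $E_{k}$ from index $k$ on, and then observe that the defining condition for $E_{l+1}$ becomes vacuous, closing an induction on $l$. Your explicit remark that the normalizer constraints in Bryant's definition are automatically satisfied because the relevant terms are upper central (hence normal) is a point the paper handles implicitly via its corollary to Lemma \ref{2.6}, but it is the same argument.
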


\begin{proof}
By Theorem \ref{3.1.2} we know that $E_{k}$ is a $k$-nilpotent subgroup. We
will argue by induction on $l.$ For $l=k+1,$%
\begin{align*}
C_{E_{k}}^{k+1}\left(  H\right)   & =\left\{  x\in E_{k}\mid\left[
x,H\right]  \subseteq C_{E_{k}}^{k}\left(  H\right)  \right\} \\
& =\left\{  x\in E_{k}\mid\left[  x,H\right]  \subseteq Z_{k}\left(
E_{k}\right)  \right\} \\
& =\left\{  x\in E_{k}\mid\left[  x,H\right]  \subseteq E_{k}\right\}  =E_{k}%
\end{align*}
where Lemma \ref{2.6} and $k$-nilpotence of $E_{k}$ are used. Since
\begin{align*}
E_{k+1}  & =\left\{  g\in E_{k}\mid\left[  g,C_{E_{k}}^{k+1}\left(  H\right)
\right]  \leq C_{E_{k}}^{k}\left(  H\right)  \right\} \\
& =\left\{  g\in E_{k}\mid\left[  g,E_{k}\right]  \leq E_{k}\right\}  =E_{k}%
\end{align*}
claim is true for $l=k+1.$ Suppose that our claim is true for $l=k+n,$ i.e.
$E_{k+n}=E_{k},$ we shall verify the same equality for$E_{k+n+1}.$ Since
\[
E_{k}=Z_{k}\left(  E_{k}\right)  =C_{E_{k}}^{k}\left(  H\right)  \leq
C_{E_{k}}^{k+n}\left(  H\right)  \leq E_{k}%
\]
$C_{E_{k}}^{k+n}\left(  H\right)  =E_{k}$. Then, by induction we have
\begin{align*}
C_{E_{k+n}}^{k+n+1}\left(  H\right)   & =\left\{  x\in\underset{i\leq
k+n}{\cap}N_{E_{k+n}}\left(  C_{E_{k+n}}^{i}\left(  H\right)  \right)
\mid\left[  x,H\right]  \subseteq C_{E_{k+n}}^{k+n}\left(  H\right)  \right\}
\\
& =\left\{  x\in\underset{i\leq k+n}{\cap}N_{E_{k+n}}\left(  C_{E_{k+n}}%
^{i}\left(  H\right)  \right)  \mid\left[  x,H\right]  \subseteq C_{E_{k}%
}^{k+n}\left(  H\right)  \right\} \\
& =\left\{  x\in E_{k}\mid\left[  x,H\right]  \subseteq E_{k}\right\}  =E_{k}.
\end{align*}
Then, the following equation is obtained:\
\begin{align*}
E_{k+n+1}  & =\left\{  g\in E_{k+n}\mid\left[  g,C_{E_{k+n}}^{k+n+1}\left(
H\right)  \right]  \leq C_{E_{k+n}}^{k+n}\left(  H\right)  \right\} \\
& =\left\{  g\in E_{k}\mid\left[  g,E_{k}\right]  \leq C_{E_{k}}^{k+n}\left(
H\right)  \right\}  =\left\{  g\in E_{k}\mid\left[  g,E_{k}\right]  \leq
E_{k}\right\}  =E_{k}.
\end{align*}
Thus the claim holds for all $l\geq k$ natural numbers.
\end{proof}

\bigskip

In a separate preprint, we generalize the results of this subsection to
hypercentral subgroups by defining transfinite versions of the $E_{k}$-chains
(\cite{Cak}).

\bigskip

\subsection{Topological Results\label{3.2}}

In this subsection, we will be working in a group $G$ endowed with a topology
where singletons are closed and the following functions are continuous
\[
x\rightarrow x^{-1},\text{ \ \ }x\rightarrow ax,\text{ \ \ }x\rightarrow
xa,\text{ \ \ }x\rightarrow x^{-1}ax
\]
for all $a\in G$. We will show that the presence of such a topology on $G$ is
sufficient to ensure that the $E_{k}$-envelopes of arbitrary subgroups are
closed. In classes of groups that satisfy noetherianity properties of closed
subgroups, this result suffices to conclude that the $E_{k}$-envelopes
stabilize. An example of such class is that of groups satisfying the chain
condition on closed subgroups (see\textbf{\ }\cite{Bryant 2}\textbf{).}

\bigskip

\begin{lem}
\label{3.2.1}Let $G$ be a group satisfying the standing topological hypothesis
and $X\subseteq G$. Then, for every $i\in%
\mathbb{N}
,$ the subgroup $C_{G}^{i}\left(  X\right)  $ is closed.
\end{lem}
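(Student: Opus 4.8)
The plan is to argue by induction on $i$, reducing the closedness of each $C_G^i(X)$ to two elementary consequences of the standing topology: that the normalizer of a closed subgroup is closed, and that the preimage of a closed set under a commutator map $x\mapsto[x,a]$ is closed. The base case is immediate, since $C_G^0(X)=\{1\}$ is closed because singletons are closed by hypothesis. (Observe that $C_G^1(X)=C_G(X)$, the ordinary centralizer, so the argument below also recovers the familiar fact that centralizers are closed.)

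For the inductive step, suppose $C_G^n(X)$ is closed for all $n<i$. First I record that for any closed subgroup $H\le G$ the normalizer $N_G(H)=\{g\mid\forall h\in H,\ g^{-1}hg\in H\}$ is closed: for each fixed $h$ the conjugation map $g\mapsto g^{-1}hg$ is continuous by the standing hypothesis, so $\{g\mid g^{-1}hg\in H\}$ is the preimage of the closed set $H$ and hence closed, and $N_G(H)$ is the intersection over $h\in H$ of these closed sets. In particular each $N_G\big(C_G^n(X)\big)$ with $n<i$ is closed.

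Next I treat the commutator condition $[x,X]\subseteq C_G^{i-1}(X)$. For fixed $a\in G$ the map $x\mapsto[x,a]=x^{-1}a^{-1}xa$ is continuous: writing it as $x\mapsto(x^{-1}a^{-1}x)\,a$ exhibits it as the composition of the continuous conjugation map $x\mapsto x^{-1}a^{-1}x$ with the continuous right translation $y\mapsto ya$. Since $C_G^{i-1}(X)$ is closed by the inductive hypothesis, the set $\{x\mid[x,a]\in C_G^{i-1}(X)\}$ is closed for each $a$, and intersecting over $a\in X$ shows that $\{x\mid[x,X]\subseteq C_G^{i-1}(X)\}$ is closed.

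Finally, combining these observations with the defining description
\[
C_G^i(X)=\Big(\bigcap_{n<i}N_G\big(C_G^n(X)\big)\Big)\cap\{x\mid[x,X]\subseteq C_G^{i-1}(X)\}
\]
exhibits $C_G^i(X)$ as an intersection of closed sets, completing the induction. The one point requiring care --- and the step I expect to be the main (and only minor) obstacle --- is verifying that both the conjugation and commutator maps can be assembled purely from the one-variable operations the hypothesis guarantees to be continuous. Since multiplication need not be jointly continuous here, it is essential that every condition defining $C_G^i(X)$ be rewritten as an intersection over single fixed elements, so that only fixed-parameter translations, inversions, and conjugations ever occur.
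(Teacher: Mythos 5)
Your proof is correct and follows essentially the same route as the paper: induction on $i$, closedness of the normalizers of the (closed, by induction) lower iterated centralizers, and closedness of the preimages of $C_G^{i-1}(X)$ under the fixed-parameter commutator maps $x\mapsto[x,a]$, intersected over $a\in X$. The only difference is cosmetic: where the paper cites Wehrfritz for the closedness of normalizers of closed subgroups, you prove it directly from the continuity of conjugation, which is a harmless (indeed self-contained) improvement.
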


\begin{proof}
We proceed by induction on $i$. When $i=0$, $C_{G}^{0}\left(  X\right)
=\left\{  e\right\}  ,$ and since each single element subset of $G$ is closed,
the claim holds. We now assume that $C_{G}^{j}\left(  X\right)  $ iterated
centralizers are closed for all $j$ natural numbers such that $j<i.$ By
definition,
\[
\bigskip C_{G}^{i}\left(  X\right)  =\left\{  g\in\underset{j<i}{\cap}%
N_{G}\left(  C_{G}^{j}\left(  X\right)  \right)  \mid\left[  g,X\right]  \leq
C_{G}^{i-1}\left(  X\right)  \right\}  .
\]
Each $C_{G}^{j}\left(  X\right)  $ is closed by the inductive assumption for
$j<i$. Then by \cite[Lemma 5.4]{W}, $N_{G}\left(  C_{G}^{j}\left(  X\right)
\right)  $ is closed. Hence $\underset{j<i}{\cap}N_{G}\left(  C_{G}^{j}\left(
X\right)  \right)  $ is closed. On the other hand, the following function is
continuous in $G$%
\[
k_{x}:G\rightarrow G,\ \ \ g\longmapsto\left[  g,x\right]
\]
where $x$ is a fixed element of $X$. The inverse image of $C_{G}^{i-1}\left(
X\right)  \leq G$ with respect to function $k_{x}$ is%
\[
k_{x}^{-1}\left(  C_{G}^{i-1}\left(  X\right)  \right)  =\left\{  g\in G\mid
k_{x}\left(  g\right)  \in C_{G}^{i-1}\left(  X\right)  \right\}  =\left\{
g\in G\mid\left[  g,x\right]  \in C_{G}^{i-1}\left(  X\right)  \right\}  .
\]
As the intersection $\cap k_{x}^{-1}\left(  C_{G}^{i-1}\left(  X\right)
\right)  $ is also closed, and%
\begin{align*}
g  & \in\underset{x\in X}{\cap}k_{x}^{-1}\left(  C_{G}^{i-1}\left(  X\right)
\right) \\
& \Leftrightarrow\left[  g,x\right]  \in C_{G}^{i-1}\left(  X\right)  ,\text{
\ for all }x\in X\\
& \Leftrightarrow\left[  g,X\right]  \subset C_{G}^{i-1}\left(  X\right)  ,
\end{align*}
it follows that $C_{G}^{i}\left(  X\right)  $ is closed.
\end{proof}

\bigskip

\begin{lem}
Let $G$ be a group satisfying the topological hypothesis of this subsection
and $H$ a subgroup of $G$. Then the $E_{k}\left(  H\right)  $ are closed.
\end{lem}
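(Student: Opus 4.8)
The plan is to prove that each $E_k(H)$ is closed by induction on $k$. The base case $E_0(H)=G$ is closed trivially. For the inductive step, I would assume $E_k(H)$ is closed and show that $E_{k+1}(H)$ is closed. The key structural observation is that the definition of $E_{k+1}(H)$ involves only finitely many ingredients built from $E_k(H)$: the iterated centralizers $C_{E_k(H)}^{k+1}(H)$ and $C_{E_k(H)}^{k}(H)$, together with a commutator condition relating them.

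First I would establish that the topological hypotheses are inherited by the closed subgroup $E_k(H)$ with its subspace topology. One checks that singletons remain closed and that the four maps $x\mapsto x^{-1}$, $x\mapsto ax$, $x\mapsto xa$, $x\mapsto x^{-1}ax$ restrict to continuous self-maps of $E_k(H)$ for every $a\in E_k(H)$, since restriction of a continuous map to a subspace, followed by corestriction to a closed set containing its image, is continuous. This lets me apply Lemma \ref{3.2.1} inside $E_k(H)$ rather than only inside $G$. Consequently $C_{E_k(H)}^{k}(H)$ and $C_{E_k(H)}^{k+1}(H)$ are both closed subgroups of $E_k(H)$, hence closed in $G$ as well (a closed subset of a closed subspace is closed in the ambient space).

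Next I would handle the commutator condition defining $E_{k+1}(H)$, namely $[g,C_{E_k(H)}^{k+1}(H)]\leq C_{E_k(H)}^{k}(H)$. This is exactly the kind of condition treated in the proof of Lemma \ref{3.2.1}: for each fixed $y\in C_{E_k(H)}^{k+1}(H)$, the map $g\mapsto [g,y]$ is continuous (it is the restriction of $x\mapsto x^{-1}ax$ composed with translation, or directly the continuous commutator map), so the preimage of the closed set $C_{E_k(H)}^{k}(H)$ under it is closed. Intersecting over all $y\in C_{E_k(H)}^{k+1}(H)$ preserves closedness, and this intersection is precisely the set of $g\in E_k(H)$ satisfying the commutator condition. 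Intersecting once more with the closed set $E_k(H)$ itself yields that $E_{k+1}(H)$ is closed, completing the induction.

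The main obstacle I anticipate is the bookkeeping around which topology the iterated centralizers are computed in: the definition of $E_{k+1}(H)$ uses centralizers taken inside $E_k(H)$, not inside $G$, so Lemma \ref{3.2.1} cannot be invoked directly and one genuinely needs the transfer of the topological hypotheses to the subspace $E_k(H)$. Verifying that the restricted operations remain continuous and that closedness passes correctly between $E_k(H)$ and $G$ is routine but must be stated carefully, since the entire argument rests on being able to recycle Lemma \ref{3.2.1} relativized to the closed subgroup $E_k(H)$.
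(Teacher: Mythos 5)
Your proof is correct and uses the same overall induction and the same continuity-of-commutator mechanism as the paper; the one point where you genuinely diverge is the justification that the target subgroup $C_{E_{k}(H)}^{k}\left(H\right)$ is closed. You transfer the standing topological hypothesis to the closed subgroup $E_{k}(H)$ with its subspace topology and rerun Lemma \ref{3.2.1} inside $E_{k}(H)$ with $X=H$; this is legitimate (the four maps restrict to continuous self-maps of the subgroup, singletons stay closed, and a closed subset of a closed subspace is closed in $G$), and it has the side benefit of showing that $C_{E_{k}(H)}^{k+1}\left(H\right)$ is closed as well, which neither you nor the paper actually needs, since that subgroup only serves as the index set for the intersection of preimages. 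The paper avoids any relativization: it invokes Lemma \ref{2.6} to rewrite $C_{E_{k}(H)}^{k}\left(H\right)$ as $Z_{k}\left(E_{k}\right)$ and then uses the identity $Z_{k}\left(E_{k}\right)=C_{G}^{k}\left(E_{k}\right)\cap E_{k}$ (part (ii) of Bryant's lemma applied to the subgroup $E_{k}$ of $G$), so that Lemma \ref{3.2.1} can be applied in the ambient group $G$ with $X=E_{k}$; closedness then follows from the induction hypothesis on $E_{k}$. Your route is more self-contained topologically but requires the routine verification that the hypotheses are hereditary to closed subgroups; the paper's route stays entirely in $G$ at the cost of importing the group-theoretic identification of the iterated centralizers of $H$ in $E_{k}$ with the iterated centers of $E_{k}$. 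Both arguments are complete.
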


\begin{proof}
Since $G$ is the ambient space, our claim is trivial for $k=0$. We now assume
that $E_{k}$ is closed. For $k+1$, we have
\begin{align*}
E_{k+1}  & =\left\{  g\in E_{k}\mid\left[  g,C_{E_{k}}^{k+1}\left(  H\right)
\right]  \leq C_{E_{k}}^{k}\left(  H\right)  \right\} \\
& =\left\{  g\in E_{k}\mid\left[  g,C_{E_{k}}^{k+1}\left(  H\right)  \right]
\leq Z_{k}\left(  E_{k}\right)  \right\}  .
\end{align*}
For iterated centers $Z_{k}\left(  E_{k}\right)  =C_{G}^{k}\left(
E_{k}\right)  \cap E_{k}$. By the induction hypothesis and Lemma
\ref{3.2.1}\textbf{,} $E_{k}$\textbf{\ }and $C_{G}^{k}\left(  E_{k}\right)  $
are closed. Thus, $Z_{k}(E_{k})$ is also closed. Let $x\in C_{E_{k}(H)}%
^{k+1}\left(  H\right)  .$ By the properties of the topology on $G$, the
following function is continuous:%
\[
k_{x}:G\rightarrow G,\ \ \ g\longmapsto\left[  g,x\right]  .
\]
The inverse image of $Z_{k}\left(  E_{k}\right)  $ with respect to $k_{x}$ is
\[
k_{x}^{-1}\left(  Z_{k}\left(  E_{k}\right)  \right)  =\left\{  g\in G\mid
k_{x}\left(  g\right)  \in Z_{k}\left(  E_{k}\right)  \right\}  =\left\{  g\in
G\mid\left[  g,x\right]  \in Z_{k}\left(  E_{k}\right)  \right\}
\]
and closed. Moreover,
\begin{align*}
g  & \in\cap k_{x}^{-1}\left(  Z_{k}\left(  E_{k}\right)  \right)
\Leftrightarrow g\in k_{x}^{-1}\left(  Z_{k}\left(  E_{k}\right)  \right)
,\text{ \ for all }x\in C_{E_{k}(H)}^{k+1}\left(  H\right) \\
& \Leftrightarrow\left[  g,x\right]  \in Z_{k}\left(  E_{k}\right)  ,\text{
\ for all }x\in C_{E_{k}(H)}^{k+1}\left(  H\right) \\
& \Leftrightarrow\left[  g,C_{E_{k}(H)}^{k+1}\left(  H\right)  \right]
\subset Z_{k}\left(  E_{k}\right)
\end{align*}
It follows that $E_{k+1}$ is closed.
\end{proof}

\bigskip

\begin{cor}
Let $G$ be a group which satisfies the standing topological hypothesis of the
subsection and $H\leq G$. If $G$ satisfies the minimal condition on closed
subgroups then $E_{k}(H)$- envelopes stabilize.
\end{cor}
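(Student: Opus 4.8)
The plan is to reduce the statement to the descending-chain structure of the envelopes together with the closedness result just established. Recall from Definition \ref{2.1} that the envelopes form a descending sequence $G = E_0(H) \geq E_1(H) \geq \cdots \geq H$, and that by the preceding lemma every $E_k(H)$ is a closed subgroup of $G$ (which in turn rests on Lemma \ref{3.2.1}, giving closedness of the iterated centralizers). Hence $(E_k(H))_{k \in \mathbb{N}}$ is a \emph{descending chain of closed subgroups} of $G$, which is precisely the type of object the hypothesis controls.

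Next I would invoke the standard equivalence, in any partially ordered set, between the minimal condition and the descending chain condition: a poset satisfies the minimal condition if and only if it contains no strictly descending infinite chain. Applying this to the poset of closed subgroups of $G$, the minimal condition on closed subgroups guarantees that every descending chain of closed subgroups is eventually constant. Concretely, were the chain $(E_k(H))_k$ not eventually stationary, one could extract a strictly descending subchain, and the nonempty family consisting of its terms would then have no minimal element, contradicting the hypothesis. Therefore there exists $N \in \mathbb{N}$ with $E_k(H) = E_N(H)$ for all $k \geq N$, which is the asserted stabilization.

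The point to emphasize is that essentially all of the genuine work has already been carried out in the two preceding lemmas, which together upgrade the topological hypothesis on $G$ to the closedness of each envelope $E_k(H)$. Given these, the corollary is a formal consequence and presents no real obstacle; the only mild care needed is in recording the equivalence between the minimal-condition phrasing of the hypothesis and the descending chain condition that the descending sequence of envelopes satisfies verbatim.
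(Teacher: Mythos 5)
Your argument is correct and is exactly the intended one: the paper in fact states this corollary without proof, treating it as an immediate consequence of the preceding lemma (closedness of the $E_k(H)$) together with the equivalence of the minimal condition and the descending chain condition on closed subgroups. You have simply made explicit what the paper leaves implicit, so nothing further is needed.
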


\bigskip

\begin{cor}
Let $G$ be a linear group and $H\leq G$. Then $E_{k}(H)$-envelopes stabilize
in $G$.
\end{cor}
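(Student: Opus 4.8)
The plan is to exhibit on $G$ a single topology meeting both hypotheses of the preceding corollary---the standing topological hypothesis of this subsection together with the minimal condition on closed subgroups---and then to invoke that corollary directly. Being linear, $G$ is (isomorphic to) a subgroup of $GL_{n}(K)$ for some field $K$ and some $n\in\mathbb{N}$. I would endow $GL_{n}(K)$ with its Zariski topology and give $G$ the induced subspace topology. It then remains only to verify the two conditions for this topology.

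For the standing topological hypothesis, first note that the Zariski topology on $GL_{n}(K)$ is $T_{1}$: a single point $a=(a_{ij})$ is the common zero set of the regular functions $X_{ij}-a_{ij}$, hence closed, and closedness of singletons passes to the subspace $G$. Next, inversion $x\mapsto x^{-1}$, the left and right translations $x\mapsto ax$ and $x\mapsto xa$, and conjugation $x\mapsto x^{-1}ax$ are all morphisms of the affine algebraic group $GL_{n}(K)$ (their coordinates are rational functions whose denominator is a power of $\det$), so they are Zariski-continuous; since $G$ is a subgroup, each restricts to a self-map of $G$, and restriction of a continuous map to subspace topologies on source and target remains continuous. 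Thus $G$ satisfies the standing topological hypothesis.

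For the minimal condition on closed subgroups, the key point is that $GL_{n}(K)$ is an affine variety whose coordinate ring $K[X_{ij}][\det^{-1}]$ is Noetherian; consequently $GL_{n}(K)$ is a Noetherian topological space and its closed subsets satisfy the descending chain condition. Since a subspace of a Noetherian space is again Noetherian, the closed subsets of $G$---in particular the closed subgroups of $G$---satisfy the descending chain condition, which is exactly the minimal condition required. Having verified both hypotheses, the preceding corollary applies and yields that the chain $(E_{k}(H))_{k}$ stabilizes in $G$. I expect no genuine obstacle here beyond foundational bookkeeping: the points to watch are that the four structure maps really do restrict to continuous self-maps of $G$ in the subspace topology, and that the descending chain condition for arbitrary closed subsets legitimately transfers to the closed subgroups occurring in the $E_{k}$-chain---which it does, since each $E_{k}(H)$ is closed by the previous lemma. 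The only mildly delicate choice is recognizing that the Zariski topology is the appropriate topology and that its singletons are closed over an arbitrary field, not merely an algebraically closed one.
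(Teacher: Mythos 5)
Your proposal is correct and follows essentially the same route as the paper: both reduce the statement to the immediately preceding corollary by equipping the linear group with a topology that satisfies the standing hypothesis of the subsection together with the minimal condition on closed subgroups. The only difference is one of packaging --- the paper disposes of the chain condition in a single line by citing Theorem 3.5 of Bryant's \emph{The verbal topology of a group}, whereas you verify both hypotheses directly for the Zariski subspace topology on $G\leq GL_{n}(K)$ via the Hilbert basis theorem, which is a self-contained rendering of the same underlying argument (and you are right that closedness of singletons and Noetherianity require no hypothesis of algebraic closure on $K$).
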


\begin{proof}
By Theorem 3.5 of \cite{Bryant 2},\textbf{\ }$G$ satisfies the chain condition
on closed subgroups.
\end{proof}

\bigskip

\section{Counter-Example}

\bigskip In this section we will construct a counterexample to the
stabilization problem of the $E_{k}$-envelopes. We will be working in the
symmetric group on the natural numbers $Sym\left(
\mathbb{N}
\right)  $ that we will denote $G$. The subgroup $H$ whose $E_{k}$-envelopes
form an infinite descending chain is defined as follows. Let $K=\underset{x\in%
\mathbb{N}
}{\oplus}\left\langle \left(  2x\text{ \ }2x+1\right)  \right\rangle $. Note
that $K$ is a normal subgroup of $G.$ Now we define a special permutation by
the action $\left(  2x\text{\ \ }2x+1\right)  \mapsto\left(  2f\left(
x\right)  \text{\ \ }2f\left(  x\right)  +1\right)  $ for every $x\in%
\mathbb{N}
,$ where%

\[
f\left(  x\right)  =\left\{
\begin{array}
[c]{c}%
x\longmapsto x+2,\text{\ \ \ \ \ \ \ \ \ \ \ \ \ \ \ \ \ \ \ if }x\text{ is
even,}\\
x\longmapsto x-2,\text{\ \ \ \ \ \ if }x\text{ is odd and }x\neq1\text{,}\\
1\longmapsto0,\text{
\ \ \ \ \ \ \ \ \ \ \ \ \ \ \ \ \ \ \ \ \ \ \ \ \ \ \ \ if }x=1\text{ }%
\end{array}
\right.  .
\]

\bigskip We define $H=K\rtimes\langle f\rangle.$ We will show the
$E_{k}\left(  H\right)  $ form an infinite descending chain.

We emphasize that $G$ is not an $\mathfrak{M}_{C}$-group. Indeed, one can
easily show the existence of infinite descending chains of centralizers.

From now until the end of the section we will detail the determination of the
nature of $E_{k}\left(  H\right)  .$ The main step will be to show that the
iterated centralizers of $H$ are finite subgroups of $G$ whose nontrivial
elements are of infinite support and that form an infinite ascending chain.

\bigskip

We use a special notation for the elements of the group $\underset{x\in%
\mathbb{N}
}{\Pi}\left\langle \left(  2x\text{ \ }2x+1\right)  \right\rangle $:%
\[
g\in\underset{x\in%
\mathbb{N}
}{\Pi}\left\langle \left(  2x\text{ \ }2x+1\right)  \right\rangle \Rightarrow
g=\underset{x\in%
\mathbb{N}
}{\Pi}\left(  2x\text{ \ }2x+1\right)  ^{j_{g}\left(  x\right)  }%
\]
where $j_{g}(x)\in\{0,1\}.$ The arguments about the function $j_{g}$ involve
elementary arithmetic. This will always be modulo 2.

The actions on $%
\mathbb{N}
$ will be on the left

\bigskip

First, we shall give a technical lemma which include two known results:

\begin{lemma}
\label{4.1}Let $G_{0}$ satisfy the property
\[
G\geq G_{0}\geq\underset{x\in%
\mathbb{N}
}{\Pi}\left\langle \left(  2x\text{ \ }2x+1\right)  \right\rangle .
\]
Then the following equalities hold:

\begin{enumerate}
\item[(i)] $C_{G}\left(  \underset{x\in%
\mathbb{N}
}{\oplus}\left\langle \left(  2x\text{ \ }2x+1\right)  \right\rangle \right)
=\underset{x\in%
\mathbb{N}
}{\prod}\left\langle \left(  2x\text{ \ }2x+1\right)  \right\rangle ,$

\item[(ii)] $C_{G_{0}}\left(  H\right)  =\left\langle \underset{x\in%
\mathbb{N}
}{\prod}\left(  2x\text{ \ }2x+1\right)  \right\rangle .$
\end{enumerate}
\end{lemma}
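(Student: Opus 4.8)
The plan is to handle the two parts in tandem, since both reduce to understanding how a permutation can act on the pairs $P_x:=\{2x,2x+1\}$. Throughout I will abbreviate $t_x:=(2x\ \ 2x+1)$, so that $K=\bigoplus_x\langle t_x\rangle$ and the full product is $\prod_x\langle t_x\rangle$. For (i), I would first record that for any $g\in G$ one has $g t_x g^{-1}=(g(2x)\ \ g(2x+1))$, so $g$ commutes with the generator $t_x$ if and only if $\{g(2x),g(2x+1)\}=\{2x,2x+1\}$, i.e. if and only if $g$ preserves the pair $P_x$ setwise. Because the pairs $P_x$ partition $\mathbb{N}$, an element centralizing \emph{every} $t_x$ must restrict to each $P_x$ as either the identity or $t_x$, and is therefore a (possibly infinite-support) product $\prod_x t_x^{\varepsilon_x}$ with $\varepsilon_x\in\{0,1\}$; this is a well-defined permutation precisely because the supports are pairwise disjoint, and it lies in $\prod_x\langle t_x\rangle$. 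Conversely every element of that full direct product is a product of transpositions with pairwise disjoint supports, hence commutes with each $t_x$ and centralizes $K$. This establishes (i).

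For (ii), the key reduction is that since $K\le H$ we have $C_{G_0}(H)\subseteq C_{G_0}(K)$, and by part (i) together with the hypothesis $G_0\ge\prod_x\langle t_x\rangle$ we get $C_{G_0}(K)=C_G(K)\cap G_0=\prod_x\langle t_x\rangle$. Thus every element of $C_{G_0}(H)$ already has the form $g=\prod_x t_x^{\varepsilon_x}$, and it only remains to impose that such a $g$ also commute with $f$. I would use the defining property of the special permutation, namely $f t_x f^{-1}=t_{f(x)}$, to compute $f g f^{-1}=\prod_x t_{f(x)}^{\varepsilon_x}=\prod_y t_y^{\varepsilon_{f^{-1}(y)}}$. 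Hence $gf=fg$ holds if and only if $\varepsilon_{f(x)}=\varepsilon_x$ for all $x$, i.e. if and only if the family $(\varepsilon_x)_x$ is constant on each orbit of $f$ acting on $\mathbb{N}$.

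The decisive step, and the one I expect to carry the real content, is the orbit analysis of $f$ on $\mathbb{N}$. Tracing the orbit of $0$ forward gives $0\mapsto2\mapsto4\mapsto\cdots$, which sweeps out all even numbers; tracing it backward gives $0,1,3,5,\ldots$ (using $f(1)=0$, $f(3)=1$, $f(5)=3,\ldots$), which sweeps out all odd numbers. Consequently $f$ is a single bi-infinite cycle on $\mathbb{N}$, so it has exactly one orbit. Therefore the constraint that $(\varepsilon_x)_x$ be constant on $f$-orbits forces it to be globally constant, leaving only $g=1$ (all $\varepsilon_x=0$) or $g=w:=\prod_x t_x$ (all $\varepsilon_x=1$). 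Since $w^2=1$ and $w\in\prod_x\langle t_x\rangle\subseteq G_0$, both elements genuinely lie in $C_{G_0}(H)$, and we conclude $C_{G_0}(H)=\{1,w\}=\langle\prod_x(2x\ \ 2x+1)\rangle$, as claimed.

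The only delicate point is thus the combinatorics of $f$: everything else is the routine observation that centralizing $K$ confines $g$ to the pair-preserving permutations, while centralizing $f$ imposes invariance of the exponent pattern under the index permutation $f$. The main obstacle is verifying that this index permutation is transitive on $\mathbb{N}$ — it is precisely the single-orbit structure of $f$ that collapses the centralizer down to the order-two group $\langle w\rangle$ rather than a large product; a different, non-transitive choice of $f$ would have yielded a much larger centralizer.
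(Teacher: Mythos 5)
Your proof is correct and takes essentially the same route as the paper's: part (i) via the observation that centralizing each $(2x\ \ 2x+1)$ forces setwise preservation of the pairs $\{2x,2x+1\}$, and part (ii) by reducing to $C_{C_{G_0}(K)}(\langle f\rangle)$ and showing the exponent pattern must satisfy $\varepsilon_{f(x)}=\varepsilon_x$, hence be constant. Your explicit check that $f$ acts on $\mathbb{N}$ as a single bi-infinite cycle makes precise the "it follows" step that the paper leaves implicit.
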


\begin{proof}

\begin{enumerate}
\item[(i)] Since $C_{G}\left(  \underset{x\in%
\mathbb{N}
}{\oplus}\left\langle \left(  2x\text{ \ }2x+1\right)  \right\rangle \right)
=C_{G}\left(  \left\{  \left(  2x\text{ \ }2x+1\right)  \mid x\in%
\mathbb{N}
\right\}  \right)  $, $g\in C_{G}\left(  \underset{x\in%
\mathbb{N}
}{\oplus}\left\langle \left(  2x\text{ \ }2x+1\right)  \right\rangle \right)
$ if and only if $g\left(  2x\text{ \ }2x+1\right)  g^{-1}=\left(  2x\text{
\ }2x+1\right)  $ for every $x\in%
\mathbb{N}
$ if and only if $g\left\{  2x,2x+1\right\}  =\left\{  2x,2x+1\right\}  $ for
every $x\in%
\mathbb{N}
$.

\item[(ii)] Since the centralizer of $K$ in $G_{0}$ is known by (i), computing
the centralizer $C_{C_{G_{0}}\left(  K\right)  }\left(  \left\langle
f\right\rangle \right)  $ is enough. Let $g\in C_{G_{0}}\left(  K\right)  .$
Then by part (i), $g$ is of the form $\underset{x\in%
\mathbb{N}
}{\prod}\left(  2x\text{ \ }2x+1\right)  ^{j_{g}\left(  x\right)  }$,
$j_{g}\left(  x\right)  \in\left\{  0,1\right\}  .$ According to this
\begin{align*}
\left[  g,f\right]   & =\left[  \underset{x\in%
\mathbb{N}
}{\prod}\left(  2x\text{ \ }2x+1\right)  ^{j_{g}\left(  x\right)  },f\right]
\\
& =\underset{x\in%
\mathbb{N}
}{\prod}\left(  \text{ }\left(  2x\text{ \ }2x+1\right)  ^{j_{g}\left(
x\right)  }f^{-1}\left(  2x\text{ \ }2x+1\right)  ^{j_{g}\left(  x\right)
}f\text{ }\right) \\
& =\underset{x\in%
\mathbb{N}
}{\prod}\left(  \text{ }\left(  2x\text{ \ }2x+1\right)  ^{j_{g}\left(
x\right)  }\left(  2f^{-1}\left(  x\right)  \text{ \ }2f^{-1}\left(  x\right)
+1\right)  ^{j_{g}\left(  x\right)  }\text{ }\right) \\
& =\underset{x\in%
\mathbb{N}
}{\prod}\left(  2x\text{ \ }2x+1\right)  ^{j_{g}\left(  x\right)
+j_{g}\left(  f\left(  x\right)  \right)  }%
\end{align*}
is obtained. Then we have
\begin{align*}
\left[  g,f\right]   & =1\Leftrightarrow\underset{x\in%
\mathbb{N}
}{\prod}\left(  2x\text{ \ }2x+1\right)  ^{j_{g}\left(  x\right)
+j_{g}\left(  f\left(  x\right)  \right)  }=1\\
& \Leftrightarrow j_{g}\left(  x\right)  +j_{g}\left(  f\left(  x\right)
\right)  =0\text{ \ \ for all }x\in%
\mathbb{N}%
\\
& \Leftrightarrow j_{g}\left(  x\right)  =j_{g}\left(  f\left(  x\right)
\right)  \text{ \ \ \ \ \ \ \ for all }x\in%
\mathbb{N}
.
\end{align*}
It follows that
\[
C_{C_{G_{0}}\left(  K\right)  }\left(  \left\langle f\right\rangle \right)
=\left\{  1,\text{ }\underset{x\in%
\mathbb{N}
}{\prod}\left(  2x\text{ \ }2x+1\right)  \right\}  =\left\langle
\underset{x\in%
\mathbb{N}
}{\prod}\left(  2x\text{ \ }2x+1\right)  \right\rangle ,
\]
and $C_{G_{0}}\left(  H\right)  =\left\langle \underset{x\in%
\mathbb{N}
}{\prod}\left(  2x\text{ \ }2x+1\right)  \right\rangle .$
\end{enumerate}
\end{proof}

\bigskip

The following result will be used in computing the iterated centralizers. It
will first be proven under a specific hypothesis which will be eliminated in
Corollary \ref{4.3}.

\bigskip

\begin{proposition}
\label{4.2}Suppose that for every $k\in%
\mathbb{N}
,$ $\underset{x\in%
\mathbb{N}
}{\prod}\left\langle \left(  2x\text{ \ }2x+1\right)  \right\rangle \leq
E_{k}\left(  H\right)  $. Then $C_{E_{k}\left(  H\right)  }^{i+1}\left(
H\right)  \leq\underset{x\in%
\mathbb{N}
}{\prod}\left\langle \left(  2x\text{ \ }2x+1\right)  \right\rangle $ for all
$k\in%
\mathbb{N}
$ and $i\leq k$, and for every $h\in C_{E_{k}\left(  H\right)  }^{i+1}\left(
H\right)  $, for all $x\in%
\mathbb{N}
$ the following relation holds:
\[
j_{h}\left(  x\right)  =j_{h}\left(  x+2^{\left(  i+1\right)  }\right)  .
\]
In particular, $C_{E_{k}\left(  H\right)  }^{i+1}\left(  H\right)  $ is finite
for every $k\in%
\mathbb{N}
$ and $i\leq k+1.$
\end{proposition}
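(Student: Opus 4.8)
The plan is to fix $k$ and induct on $i$, proving simultaneously that $C_{E_{k}(H)}^{i+1}(H)\leq P$ (writing $P:=\prod_{x\in\mathbb{N}}\langle(2x\ 2x+1)\rangle$) and that every $h$ in it satisfies the periodicity $j_{h}(x)=j_{h}(x+2^{i+1})$; the finiteness is then immediate, since a $2^{i+1}$-periodic $\{0,1\}$-valued function is determined by its values on $\{0,\dots,2^{i+1}-1\}$. For the base case $i=0$, the Corollary to Lemma \ref{2.6} identifies $C_{E_{k}(H)}^{1}(H)$ with $C_{E_{k}(H)}(H)$, which by Lemma \ref{4.1}(ii) (applicable because the standing hypothesis gives $E_{k}(H)\geq P\geq K$) equals $\langle\prod_{x}(2x\ 2x+1)\rangle$; both of its elements lie in $P$ with constant, hence $2$-periodic, support function.

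For the inductive step I would take $h\in C_{E_{k}(H)}^{i+2}(H)$; by the Corollary to Lemma \ref{2.6} together with Lemma \ref{2.6} this means $[h,H]\subseteq Z_{i+1}(E_{k}(H))=C_{E_{k}(H)}^{i+1}(H)\leq P$, the last inclusion by induction. First I would show $h\in P$. Computing $[h,(2x\ 2x+1)]=(h^{-1}(2x)\ h^{-1}(2x+1))(2x\ 2x+1)$, a short case analysis shows that membership of this element in $P$ forces $h^{-1}$ to send each block $\{2x,2x+1\}$ onto a single block, since otherwise a genuine cross-block transposition or a $3$-cycle appears, neither of which is a product of the standard transpositions. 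Hence $h$ permutes the blocks, inducing some $\bar{h}\in\mathrm{Sym}(\mathbb{N})$. Since $[h,f]\in P=C_{G}(K)$ fixes every block, $\bar{h}$ commutes with the block permutation $\bar{f}$ induced by $f$, which is the single bi-infinite cycle $\cdots\!\to\!3\!\to\!1\!\to\!0\!\to\!2\!\to\!\cdots$; its centralizer in $\mathrm{Sym}(\mathbb{N})$ is $\langle\bar{f}\rangle$, so $\bar{h}=\bar{f}^{\,m}$ for some $m$. Then $[h,(2x\ 2x+1)]$ has support inside the finite set $\{x,\bar{f}^{-m}(x)\}$, whereas every nontrivial element of $C_{E_{k}(H)}^{i+1}(H)$ has infinite support by the inductive periodicity; so each such commutator is trivial, forcing $m=0$ and $h\in C_{G}(K)=P$.

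It remains to upgrade the period from $2^{i+1}$ to $2^{i+2}$. Writing $a:=j_{h}$ and using the formula $[h,f]=\prod_{x}(2x\ 2x+1)^{a(x)+a(f(x))}$ from the proof of Lemma \ref{4.1}(ii), the inductive periodicity applied to $[h,f]\in C_{E_{k}(H)}^{i+1}(H)$ says that $b(x):=a(x)+a(f(x))$ is $2^{i+1}$-periodic. Transporting everything to $\mathbb{Z}$ along the $f$-cycle via $\phi(n)=2n$ for $n\geq0$ and $\phi(-n)=2n-1$ for $n\geq1$, so that $f$ becomes the shift, the relation $b(x)=b(x+2^{i+1})$ becomes $B(n)=B(n+2^{i})$ for $n\geq0$ and $B(n)=B(n-2^{i})$ for $n\leq-1$, where $A:=a\circ\phi$ and $B(n)=A(n)+A(n+1)$. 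On each half a discrete-derivative identity then shows that $n\mapsto A(n)+A(n\pm 2^{i})$ is constant, and doubling yields $A(n)=A(n+2^{i+1})$ for $n\geq0$ and $A(n)=A(n-2^{i+1})$ for $n\leq0$; translating back gives exactly $a(x)=a(x+2^{i+2})$ for both parities of $x$. Finally, for the finiteness statement at $i=k+1$ one runs the same argument once more on $C_{E_{k}(H)}^{k+2}(H)$ directly from its definition, using only the data already established for $C_{E_{k}(H)}^{k+1}(H)$. I expect this last step — matching the arithmetic period $2^{i+1}$ on $\mathbb{N}$ against the cyclic dynamics of $f$, and in particular controlling the wrap-around at the indices $0,1$ where $f$ changes parity — to be the main obstacle; the block-permutation and the $m=0$ arguments are comparatively routine once Lemma \ref{4.1} is in hand.
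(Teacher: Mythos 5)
Your proposal is correct and follows essentially the same route as the paper: induction on $i$, with containment in $\prod_{x}\left\langle (2x\ 2x+1)\right\rangle$ forced by comparing the finite support of the commutators $[h,(2x\ 2x+1)]$ against the infinite support of nontrivial elements of $C_{E_{k}(H)}^{i}(H)$ guaranteed by the inductive periodicity, and the period-doubling obtained by telescoping the relation $j_{h}(x)+j_{h}(f(x))=j_{[h,f]}(x)$ along the orbit of $f$. Your reparametrization of the $f$-cycle as the shift on $\mathbb{Z}$ with a discrete-derivative constancy argument neatly replaces the paper's explicit partial sums and its case split on $\sum_{a}j_{[h,f]}(2a)$, and your detour through block permutations and the centralizer of the induced shift is a harmless elaboration of the paper's more direct finite-support argument --- but these are presentational rather than structural differences.
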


\begin{proof}
Let $k\in%
\mathbb{N}
.$ We proceed by induction on $i\leq k$. For $i=0$, we want to show that claim
holds in the centralizer $C_{E_{k}\left(  H\right)  }\left(  H\right)  $. It
is enough to compute the centralizers $C_{E_{k}\left(  H\right)  }\left(
K\right)  $ and $C_{E_{k}\left(  H\right)  }\left(  \left\langle
f\right\rangle \right)  $. By the hypothesis of the proposition, since the
group $\underset{x\in%
\mathbb{N}
}{\prod}\left\langle \left(  2x\text{ \ }2x+1\right)  \right\rangle $ is
abelian, $\left\langle \underset{x\in%
\mathbb{N}
}{\prod}\left(  2x\text{ \ }2x+1\right)  \right\rangle \leq$ $C_{E_{k}\left(
H\right)  }\left(  H\right)  $. Now we show that this inclusion in fact is an equality.

By Lemma \ref{4.1}\textit{\ (i)}\textbf{\ }we have $C_{E_{k}\left(  H\right)
}\left(  K\right)  =\underset{x\in%
\mathbb{N}
}{\prod}\left\langle \left(  2x\text{ \ }2x+1\right)  \right\rangle .$We now
compute $C_{C_{E_{k}\left(  H\right)  }\left(  K\right)  }\left(  \left\langle
f\right\rangle \right)  $. Let $g\in\underset{x\in%
\mathbb{N}
}{\prod}\left\langle \left(  2x\text{ \ }2x+1\right)  \right\rangle .$ Thus $g
$ is the form $\underset{x\in%
\mathbb{N}
}{\prod}\left(  2x\text{ \ }2x+1\right)  ^{j_{g}\left(  x\right)  }$. We have
already proven in Lemma \ref{4.1} that $\left[  g,f\right]  =1$ if and only if
$j_{g}\left(  x\right)  =j_{g}\left(  f\left(  x\right)  \right)  $ for every
$x\in%
\mathbb{N}
.$

Taking into consideration the function $f$,
\[
j_{g}\left(  x\right)  =j_{g}\left(  f\left(  x\right)  \right)
\Leftrightarrow\left\{
\begin{array}
[c]{c}%
j_{g}\left(  2x\right)  =j_{g}\left(  2x+2\right)  ,\text{ \ \ \ \ \ \ }x\in%
\mathbb{N}%
\\
j_{g}\left(  2x+1\right)  =j_{g}\left(  2x-1\right)  ,\text{ \ }1\leq x,\text{
}x\in%
\mathbb{N}%
\\
j_{g}\left(  1\right)  =j_{g}\left(  0\right)  \text{
\ \ \ \ \ \ \ \ \ \ \ \ \ \ \ \ \ \ \ \ }%
\end{array}
\right.
\]
is obtained. So,
\[
C_{C_{E_{k}\left(  H\right)  }\left(  K\right)  }\left(  \left\langle
f\right\rangle \right)  =\left\{  1,\text{ }\underset{x\in%
\mathbb{N}
}{\prod}\left(  2x\text{ \ }2x+1\right)  \right\}  =\left\langle
\underset{x\in%
\mathbb{N}
}{\prod}\left(  2x\text{ \ }2x+1\right)  \right\rangle .
\]
Therefore,
\[
C_{E_{k}\left(  H\right)  }\left(  H\right)  =\left\langle \underset{x\in%
\mathbb{N}
}{\prod}\left(  2x\text{ \ }2x+1\right)  \right\rangle .
\]
The equation $j_{g}\left(  x\right)  =j_{g}\left(  x+2\right)  $ holds
directly since the equation $j_{g}\left(  x\right)  =j_{g}\left(  x+1\right)
$ is true in $C_{E_{k}\left(  H\right)  }\left(  H\right)  $. The claim
follows for $i=0$.

Assume it is satisfied for $\left(  i-1\right)  .$ So by induction
$C_{E_{k}\left(  H\right)  }^{i}\left(  H\right)  \leq\underset{x\in%
\mathbb{N}
}{\prod}\left\langle \left(  2x\text{ \ }2x+1\right)  \right\rangle $ and for
every $h\in C_{E_{k}\left(  H\right)  }^{i}\left(  H\right)  ,$ the equality
$j_{h}\left(  x\right)  =j_{h}\left(  x+2^{i}\right)  $ holds for every $x\in%
\mathbb{N}
$. Note that it follows from the induction hypothesis and the equality
$j_{h}\left(  x\right)  =j_{h}\left(  x+2^{i}\right)  $ that the elements of
$C_{E_{k}\left(  H\right)  }^{i}\left(  H\right)  $ apart from the identity
element have infinite support. For $i\in%
\mathbb{N}
$, since $C_{E_{k}\left(  H\right)  }^{i+1}\left(  H\right)  =\left\{  g\in
E_{k}\left(  H\right)  \mid\left[  g,H\right]  \subseteq C_{E_{k}\left(
H\right)  }^{i}\left(  H\right)  \right\}  $ and $H=K\rtimes\left\langle
f\right\rangle ,$ every $g\in C_{E_{k}\left(  H\right)  }^{i+1}\left(
H\right)  $ has no satisfy the following conditions:

\begin{itemize}
\item $\left[  g,K\right]  \subseteq C_{E_{k}\left(  H\right)  }^{i}\left(
H\right)  ,$

\item $\left[  g,f\right]  \subseteq C_{E_{k}\left(  H\right)  }^{i}\left(
H\right)  .$
\end{itemize}

Our standing hypothesis on $\underset{x\in%
\mathbb{N}
}{\prod}\left\langle \left(  2x\text{ \ }2x+1\right)  \right\rangle $ yields
two cases:

\begin{description}
\item[a] $g\in\underset{x\in%
\mathbb{N}
}{\prod}\left\langle \left(  2x\text{ \ }2x+1\right)  \right\rangle ,$

\item[b] $g\in E_{k}\left(  H\right)  \diagdown\underset{x\in%
\mathbb{N}
}{\prod}\left\langle \left(  2x\text{ \ }2x+1\right)  \right\rangle .$
\end{description}

We will describe the form of $g\in E_{k}\left(  H\right)  $ satisfying these conditions.

\begin{itemize}
\item We start with the commutator condition on $\left[  g,K\right]  $.
\end{itemize}

\begin{description}
\item[a] For $g\in\underset{x\in%
\mathbb{N}
}{\prod}\left\langle \left(  2x\text{ \ }2x+1\right)  \right\rangle $,
$\left[  g,K\right]  =1\subseteq C_{E_{k}\left(  H\right)  }^{i}\left(
H\right)  $.

\item[b] For $g\in E_{k}\left(  H\right)  \diagdown\underset{x\in%
\mathbb{N}
}{\prod}\left\langle \left(  2x\text{ \ }2x+1\right)  \right\rangle $, there
exists $x\in%
\mathbb{N}
$ such that
\[
g^{-1}\left\{  2x,2x+1\right\}  \neq\left\{  2x,2x+1\right\}  .
\]
Thus $\left[  g,\left(  2x\text{ \ }2x+1\right)  \right]  \neq1.$ But $\left(
2x\text{ \ }2x+1\right)  \in K$ and one can easily compute that $\left[
g,\left(  2x\text{ \ }2x+1\right)  \right]  $ has finite support. Hence we
have found an element of $K$, namely $\left(  2x\text{ \ }2x+1\right)  $, such
that $\left[  g,\left(  2x\text{ \ }2x+1\right)  \right]  \notin
C_{E_{k}\left(  H\right)  }^{i}\left(  H\right)  $; indeed, as it was
mentioned at the beginning of the inductive step of the proof, all nontrivial
elements of $C_{E_{k}\left(  H\right)  }^{i}\left(  H\right)  $ are of
infinite support. Thus, $C_{E_{k}\left(  H\right)  }^{i+1}\left(  H\right)
\leq\underset{x\in%
\mathbb{N}
}{\prod}\left\langle \left(  2x\text{ \ }2x+1\right)  \right\rangle .$\ \ \ \ \ \ \ \ \ \ \ \ \ \ \ \ \ \ \ \ \ \ \ \ \ \ \ \ \ \ \ \ \ \ \ \ \ \ \ \ \ \ \ \ \ \ \ \ \ \ \ \ \ \ \ \ \ \ \ \ \ \ \ \ \ \ \ \ \ \ \ \ \ \ \ \ \ \ \ \ \ \ \ \ \ \ \ \ \ \ \ \ \ \ \ \ \ \ \ \ \ \ \ \ \ \ \ \ \ \ \ \ \ \ \ \ \ \ \ \ \ \ \ \ \ \ \ \ \ \ \ \ \ \ \ \ \ \ \ \ \ \ \ \ \ \ \ \ \ \ \ \ \ \ \ \ \ \ \ \ \ \ \ \ \ \ \ \ \ \ \ \ \ \ \ \ \ \ \ \ \ \ \ \ \ \ \ \ \ \ \ \ \ \ \ \ \ \ \ \ \ \ \ \ \ \ \ \ \ \ \ \ \ \ \ \ \ \ \ \ \ \ \ \ \ \ \ \ \ \ \ \ \ \ \ \ \ \ \ \ \ \ \ \ \ \ \ \ \ \ \ \ \ \ \ \ \ \ \ \ \ \ \ \ \ \ \ \ \ \ \ \ \ \ \ \ \ \ \ \ \ \ \ \ \ \ \ \ \ \ \ \ \ \ \ \ \ \ \ \ \ \ \ \ \ \ \ \ \ \ \ \ \ \ \ \ \ \ \ \ \ \ \ \ \ \ \ \ \ \ \ \ \ \ \ \ \ \ \ \ \ \ \ \ \ \ \ \ \ \ \ \ \ \ \ \ \ \ \ \ \ \ \ \ \ \ \ \ \ \ \ \ \ \ \ \ \ \ \ \ \ \ \ \ \ \ \ \ \ \ \ \ \ \ \ \ \ \ \ \ \ \ \ \ \ \ \ \ \ \ \ \ \ \ \ \ \ \ \ \ \ \ \ \ \ \ \ \ \ \ \ \ \ \ \ \ \ \ \ \ \ \ \ \ \ \ \ \ \ \ \ \ \ \ \ \ \ \ \ \ \ \ \ \ \ \ \ \ \ \ \ \ \ \ \ \ \ \ \ \ \ \ \ \ \ \ \ \ \ \ \ \ \ \ \ \ \ \ \ \ \ \ \ \ \ \ \ \ \ \ \ \ \ \ \ \ \ \ \ \ \ \ \ \ \ \ \ \ \ \ \ \ \ \ \ \ \ \ \ \ \ \ \ \ \ \ \ \ \ \ \ \ \ \ \ \ \ \ \ \ \ \ \ \ \ \ \ \ \ \ \ \ \ \ \ \ \ \ \ \ \ \ \ \ \ \ \ \ \ \ \ \ \ \ \ \ \ \ \ \ \ \ \ \ \ \ \ \ \ \ \ \ \ \ \ \ \ \ \ \ \ \ \ \ \ \ \ \ \ \ \ \ \ \ \ \ \ \ \ \ \ \ \ \ \ \ \ \ \ \ \ \ \ \ \ \ \ \ \ \ \ \ \ \ \ \ \ \ \ \ \ \ \ \ \ \ 
\end{description}

\begin{itemize}
\item Using the previous step, we analyze $g\in\underset{x\in%
\mathbb{N}
}{\prod}\left\langle \left(  2x\text{ \ }2x+1\right)  \right\rangle $ such
that $\left[  g,f\right]  \subseteq C_{E_{k}\left(  H\right)  }^{i}\left(
H\right)  $. Hence, $g$ is of the form $\underset{x\in%
\mathbb{N}
}{\prod}\left(  2x\text{ \ }2x+1\right)  ^{j_{g}\left(  x\right)  }$. We have
already verified that $\left[  g,f\right]  =\underset{x\in%
\mathbb{N}
}{\prod}\left(  2x\text{ \ }2x+1\right)  ^{j_{g}\left(  x\right)
+j_{g}\left(  f\left(  x\right)  \right)  }$ for every $x\in%
\mathbb{N}
$. Since $g$ satisfies the condition
\[
\left[  g,f\right]  =\underset{x\in%
\mathbb{N}
}{\prod}\left(  2x\text{ \ }2x+1\right)  ^{j_{g}\left(  x\right)
+j_{g}\left(  f\left(  x\right)  \right)  }\in C_{E_{k}\left(  H\right)  }%
^{i}\left(  H\right)  ,
\]
\ \ \ there exists $\underset{x\in%
\mathbb{N}
}{h=\prod}\left(  2x\text{ \ }2x+1\right)  ^{j_{h}\left(  x\right)  }\in
C_{E_{k}\left(  H\right)  }^{i}\left(  H\right)  $ such that
\[
\underset{x\in%
\mathbb{N}
}{\prod}\left(  2x\text{ \ }2x+1\right)  ^{j_{g}\left(  x\right)
+j_{g}\left(  f\left(  x\right)  \right)  }=\underset{x\in%
\mathbb{N}
}{\prod}\left(  2x\text{ \ }2x+1\right)  ^{j_{h}\left(  x\right)  }%
\]
Thus $j_{g}\left(  x\right)  +j_{g}\left(  f\left(  x\right)  \right)
=j_{h}\left(  x\right)  $ for every $x\in%
\mathbb{N}
$. By the definition of $f$, for $a\in%
\mathbb{N}
$, the following relations hold:
\begin{equation}
\left\{
\begin{array}
[c]{c}%
j_{g}\left(  2a\right)  +j_{g}\left(  2a+2\right)  =j_{h}\left(  2a\right) \\
j_{g}\left(  2a+3\right)  +j_{g}\left(  2a+1\right)  =j_{h}\left(  2a+3\right)
\\
\text{ \ \ \ \ \ \ }j_{g}\left(  1\right)  +j_{g}\left(  0\right)
=j_{h}\left(  1\right)
\end{array}
\right. \label{AC3}%
\end{equation}

\end{itemize}

The powers $j_{g}\left(  x\right)  $ will be determined seperately but by
using similar methods according to the parity of $x$. By the induction
hypothesis the following system of equations is obtained for even $x$:%
\[
j_{g}\left(  2a\right)  +j_{g}\left(  2a+2\right)  =j_{h}\left(  2a\right)
,\text{ \ }0\leq a<2^{i-1}.
\]

Summing up the two sides of these equations, we get
\[
\text{\ }j_{g}\left(  0\right)  +2\text{\ }j_{g}\left(  2\right)
+...+j_{g}\left(  2^{i}\right)  =j_{h}\left(  0\right)  +...+j_{h}\left(
2^{i}-2\right)  ,
\]
and
\[
j_{g}\left(  0\right)  +\text{\ }j_{g}\left(  2^{i}\right)  =\sum
_{a=0}^{2^{i-1}-1}j_{h}\left(  2a\right)  .
\]
For the sum $\sum_{a=0}^{2^{i-1}-1}j_{h}\left(  2a\right)  $, there are two possibilities:

\begin{description}
\item[a] $\sum_{a=0}^{2^{i-1}-1}j_{h}\left(  2a\right)  =0$

\item[b] $\sum_{a=0}^{2^{i-1}-1}j_{h}\left(  2a\right)  =1.$
\end{description}

In case (a), $j_{g}\left(  0\right)  +$\ $j_{g}\left(  2^{i}\right)  =0$
implies $j_{g}\left(  0\right)  =$\ $j_{g}\left(  2^{i}\right)  $. Thus, the
equation $j_{g}\left(  x\right)  =$\ $j_{g}\left(  x+2^{i}\right)  $ holds for
$x=0$. By changing the initial value of $a$ between $0$ and $2^{i-1}-1$, we
verify that $j_{g}\left(  x\right)  =$\ $j_{g}\left(  x+2^{i}\right)  $ is
true when $x$ is even.

In case (b), the periodicity of the powers is not complete. Thus we continue:%
\[
j_{g}\left(  2a\right)  +j_{g}\left(  2a+2\right)  =j_{h}\left(  2a\right)
,\text{ \ }2^{i-1}\leq a<2^{i}-1.
\]
Summing up side by side, we obtain
\[
j_{g}\left(  2^{i}\right)  +2\text{\ }j_{g}\left(  2^{i}+2\right)
+...+j_{g}\left(  2^{i+1}\right)  =j_{h}\left(  2^{i}\right)  +...+j_{h}%
\left(  2^{i+1}-2\right)
\]
and
\[
j_{g}\left(  2^{i}\right)  +j_{g}\left(  2^{i+1}\right)  =\sum_{a=2^{i-1}%
}^{2^{i}-1}j_{h}\left(  2a\right)  =1.
\]
Finally, we put the two systems together:
\[
\left(  j_{g}\left(  0\right)  +j_{g}\left(  2^{i}\right)  \right)  +\left(
j_{g}\left(  2^{i}\right)  +j_{g}\left(  2^{i+1}\right)  \right)  =\sum
_{a=0}^{2^{i-1}-1}j_{h}\left(  2a\right)  +\sum_{a=2^{i-1}}^{2^{i}-1}%
j_{h}\left(  2a\right)  ,
\]
equivalently,%
\[
j_{g}\left(  0\right)  +2j_{g}\left(  2^{i}\right)  +j_{g}\left(
2^{i+1}\right)  =\sum_{a=0}^{2^{i}-1}j_{h}\left(  2a\right)  .
\]
Using the $2^{i-1}$-periodicity of the permutation representation of $h,$ we
have
\[
j_{g}\left(  0\right)  +j_{g}\left(  2^{i+1}\right)  =\sum_{a=0}^{2^{i}%
-1}j_{h}\left(  2a\right)  =0
\]
Therefore,
\[
j_{g}\left(  0\right)  =j_{g}\left(  2^{i+1}\right)  .
\]
Again, by changing the initial value of $a$, we can verify that $j_{g}\left(
x\right)  =j_{g}\left(  x+2^{i+1}\right)  $ for $x$ even.

Similar computations using%
\[
j_{g}\left(  2a+3\right)  +j_{g}\left(  2a+1\right)  =j_{h}\left(
2a+3\right)
\]
and induction yield the equality \ref{AC3} for odd $x$.
\end{proof}

\bigskip

Now we proceed to eliminate the assumption $\underset{x\in%
\mathbb{N}
}{\prod}\left\langle \left(  2x\text{ \ }2x+1\right)  \right\rangle \leq
E_{k}\left(  H\right)  $ for all $k\in%
\mathbb{N}
$ , from the statement of Proposition \ref{4.2}.

\begin{corollary}
\label{4.3}For every $k\in%
\mathbb{N}
$, $\underset{x\in%
\mathbb{N}
}{\prod}\left\langle \left(  2x\text{ \ }2x+1\right)  \right\rangle \leq
E_{k}\left(  H\right)  $; in particular, the assumption $\underset{x\in%
\mathbb{N}
}{\prod}\left\langle \left(  2x\text{ \ }2x+1\right)  \right\rangle \leq
E_{k}\left(  H\right)  $ is superfluous in Proposition \ref{4.2}.
\end{corollary}

\begin{proof}
The statement is trivially true for $k=0.$ We assume that it holds for $k$.
For $k+1$, by definition,
\[
E_{k+1}(H)=\left\{  g\in E_{k}(H)\mid\left[  g,C_{E_{k}(H)}^{k+1}\left(
H\right)  \right]  \leq C_{E_{k}(H)}^{k}\left(  H\right)  \right\}  .
\]
By the inductive hypothesis $\underset{x\in%
\mathbb{N}
}{\prod}\left\langle \left(  2x\text{ \ }2x+1\right)  \right\rangle \leq
E_{k}(H)$.$\ $Since $\underset{x\in%
\mathbb{N}
}{\prod}\left\langle \left(  2x\text{ \ }2x+1\right)  \right\rangle $ is
abelian, and $C_{E_{k}(H)}^{k+1}\left(  H\right)  $ is a subgroup of
$\underset{x\in%
\mathbb{N}
}{\prod}\left\langle \left(  2x\text{ \ }2x+1\right)  \right\rangle $ by
Proposition \ref{4.2}, we have
\[
\left[  g,C_{E_{k}(H)}^{k+1}\left(  H\right)  \right]  =1\leq C_{E_{k}(H)}%
^{k}\left(  H\right)
\]
for every $g\in\underset{x\in%
\mathbb{N}
}{\prod}\left\langle \left(  2x\text{ \ }2x+1\right)  \right\rangle $. Thus,
$\underset{x\in%
\mathbb{N}
}{\prod}\left\langle \left(  2x\text{ \ }2x+1\right)  \right\rangle $ is a
subgroup of $E_{k+1}(H)$.
\end{proof}

\bigskip

In the following two propositions, we will show that the iterated centralizers
$C_{E_{k}(H)}^{i}\left(  H\right)  $ form a specific ascending chain. In
Proposition\textbf{\ }\ref{4.5}\textbf{\ }we show that this ascendance is strict.

\bigskip

\begin{proposition}
For all $k\in%
\mathbb{N}
$, $C_{E_{k+1}(H)}^{k+1}\left(  H\right)  =C_{E_{k}(H)}^{k+1}\left(  H\right)
$.
\end{proposition}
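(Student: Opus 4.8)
The plan is to deduce the equality from the restriction formula for iterated centralizers, Lemma \ref{2.4}(iii), combined with the containments already established in this section. First I would apply Lemma \ref{2.4} to the nested triple $A=H$, $B=E_{k+1}(H)$, $C=E_k(H)$: since the envelopes form a descending chain terminating above $H$, we have $H\leq E_{k+1}(H)\leq E_k(H)$, so the groups are nested as the lemma requires. To invoke part (iii) I must verify its hypothesis, namely that $C_{E_k(H)}^{j}(H)=Z_j(E_k(H))$ for all $j\leq k$; this is exactly Lemma \ref{2.6} applied with ambient group $E_k(H)$.

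With the hypothesis in hand, Lemma \ref{2.4}(iii) gives
\[
C_{E_{k+1}(H)}^{k+1}(H)=C_{E_k(H)}^{k+1}(H)\cap E_{k+1}(H).
\]
It then remains to show that the iterated centralizer on the right already lies inside $E_{k+1}(H)$, so that the intersection is redundant; this is the only substantive point. By Proposition \ref{4.2} taken with $i=k$, together with Corollary \ref{4.3} which removes its standing hypothesis, we have $C_{E_k(H)}^{k+1}(H)\leq\prod_{x\in\mathbb{N}}\langle(2x\ 2x+1)\rangle$. On the other hand, Corollary \ref{4.3} applied at level $k+1$ yields $\prod_{x\in\mathbb{N}}\langle(2x\ 2x+1)\rangle\leq E_{k+1}(H)$. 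Chaining the two inclusions gives $C_{E_k(H)}^{k+1}(H)\leq E_{k+1}(H)$, whence $C_{E_k(H)}^{k+1}(H)\cap E_{k+1}(H)=C_{E_k(H)}^{k+1}(H)$ and the proposition follows.

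I do not expect a genuine obstacle: the argument is a single application of Lemma \ref{2.4}(iii) together with the fact, already available here, that the relevant iterated centralizer is trapped inside the abelian normal subgroup $\prod_{x}\langle(2x\ 2x+1)\rangle$, which in turn sits inside every envelope. The one place to be careful is the bookkeeping of indices: the hypothesis of Lemma \ref{2.4} is needed only up to level $k$ (supplied by Lemma \ref{2.6} for $E_k(H)$), while the two uses of Corollary \ref{4.3} occur at levels $k$ and $k+1$. A purely computational alternative is also available: using the corollary after Lemma \ref{2.6} one can write both sides as $\{x\in E_{\bullet}\mid[x,H]\subseteq Z_k(E_{\bullet})\}$ for the respective ambient envelope and compare them via $Z_k(E_{k+1}(H))=Z_k(E_k(H))\cap E_{k+1}(H)$ from Lemma \ref{2.4}(ii); however, the route through Lemma \ref{2.4}(iii) is shorter and more transparent.
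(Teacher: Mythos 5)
Your argument is correct and is essentially identical to the paper's own proof: both apply Lemma \ref{2.4}(iii) to the triple $H\leq E_{k+1}(H)\leq E_{k}(H)$ with the hypothesis supplied by Lemma \ref{2.6}, and then absorb the intersection with $E_{k+1}(H)$ via the inclusion $C_{E_{k}(H)}^{k+1}(H)\leq\prod_{x\in\mathbb{N}}\left\langle \left(2x\ 2x+1\right)\right\rangle \leq E_{k+1}(H)$ coming from Proposition \ref{4.2} and Corollary \ref{4.3}. No gaps.
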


\begin{proof}
We will apply Lemma \ref{2.4}\textbf{\ }\textit{(iii)}, to the triple $H\leq
E_{k+1}\left(  H\right)  \leq E_{k}\left(  H\right)  $. By Lemma
\ref{2.6}\textbf{,} since $C_{E_{k}(H)}^{i}\left(  H\right)  =Z_{i}\left(
E_{k}(H)\right)  $ for $i,k\in%
\mathbb{N}
$ and $i\leq k$, the hypotheses of Lemma \ref{2.4}\textbf{\ }is satisfied.
Thus
\[
C_{E_{k+1}(H)}^{k+1}\left(  H\right)  =C_{E_{k}(H)}^{k+1}\left(  H\right)
\cap E_{k+1}(H).
\]
The following inclusion resulting from Proposition \ref{4.2}\textbf{\ }and
Corollary \ref{4.3}\textbf{\ }%
\[
C_{E_{k}(H)}^{k+1}\left(  H\right)  \leq\underset{x\in%
\mathbb{N}
}{\prod}\left\langle \left(  2x\text{ \ }2x+1\right)  \right\rangle \leq
E_{k+1}\left(  H\right)
\]
yields
\[
C_{E_{k+1}(H)}^{k+1}\left(  H\right)  =C_{E_{k}(H)}^{k+1}\left(  H\right)  .
\]

\end{proof}

\bigskip

\begin{proposition}
\label{4.5}For all $k\in%
\mathbb{N}
$ and $i\leq k$, $C_{E_{k}(H)}^{i}\left(  H\right)  <C_{E_{k}(H)}^{i+1}\left(
H\right)  .$
\end{proposition}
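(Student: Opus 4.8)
The plan is to prove strictness not by computing each iterated centralizer explicitly but by a short index computation, using that all the groups in question are finite (Proposition \ref{4.2}) and sit inside the abelian group $P:=\prod_{x\in\mathbb{N}}\langle(2x\ 2x+1)\rangle=C_G(K)$, which satisfies $P\leq E_k(H)$ by Corollary \ref{4.3}. Fix $k$ and abbreviate $C^i:=C_{E_k(H)}^i(H)$. Recall $C^0=1$ and, by Lemma \ref{4.1}(ii) applied to $P\leq E_k(H)$, $C^1=\langle w\rangle$ with $w=\prod_{x\in\mathbb{N}}(2x\ 2x+1)$, so the case $i=0$ is the evident strict inclusion $1<C^1$. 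For $i\geq1$ I would introduce the map $\theta\colon P\to P$, $\theta(g)=[g,f]$. Since $P$ is abelian and $f$ normalizes $P$, this $\theta$ is a group homomorphism, and the computation already carried out in Lemma \ref{4.1} shows $\theta(g)=\prod_{x}(2x\ 2x+1)^{\,j_g(x)+j_g(f(x))}$, with kernel $C_P(\langle f\rangle)=C_{E_k(H)}(H)=C^1$.

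The crux is to show that $\theta$ is surjective, and this is the step I expect to carry the real content. Here I would use that, on the index set $\mathbb{N}$, the permutation $f$ is a single bi-infinite cycle $\cdots\mapsto5\mapsto3\mapsto1\mapsto0\mapsto2\mapsto4\mapsto\cdots$. Given any target $h\in P$, the system $j_g(x)+j_g(f(x))=j_h(x)$ is then solved by choosing $j_g$ freely at one point of the cycle and propagating the recursion $j_g(f(x))=j_g(x)+j_h(x)$ in both directions along it; no consistency condition arises, precisely because the cycle is infinite (in contrast to the finite-cycle case, where the total sum of the $j_h$ would have to vanish). This is the one genuinely new input, but it is elementary once the cyclic nature of $f$ is isolated.

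Finally I would identify $C^{i+1}$ as a full $\theta$-preimage. For $g\in P$ one has $[g,K]=1$, and since $C^i=Z_i(E_k(H))$ is characteristic in $E_k(H)$ it is invariant under conjugation by $f\in H$; hence $[g,H]\subseteq C^i$ is equivalent to $[g,f]\in C^i$. Combining this with Proposition \ref{4.2} (which forces $C^{i+1}\leq P$) and the corollary to Lemma \ref{2.6} yields $C^{i+1}=\theta^{-1}(C^i)$ for every $i\leq k$. Surjectivity of $\theta$ then makes it restrict to a surjection $C^{i+1}\twoheadrightarrow C^i$ with kernel $\ker\theta=C^1$, so the first isomorphism theorem gives $C^{i+1}/C^1\cong C^i$; passing to orders via finiteness, $|C^{i+1}|=|C^1|\,|C^i|=2\,|C^i|$. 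Thus $C^i$ has index $2$ in $C^{i+1}$, and in particular the inclusion $C^i\leq C^{i+1}$ is strict, as required. (Equivalently one could produce an explicit witness in $C^{i+1}\setminus C^i$, since elements of $C^i$ have period dividing $2^i$ while $\theta^{-1}(C^i)$ must contain elements of strictly larger period; but the index computation above is cleaner and self-contained, and it reduces everything to the surjectivity of $\theta$.)
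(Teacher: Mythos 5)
Your proof is correct, but it is organized quite differently from the paper's. The paper argues by explicit witness: it picks a nontrivial $h\in C_{E_k(H)}^{i}(H)$ whose first nonzero place $x_0$ (in the even or odd part) is maximal, solves the recursion $j_g(x)+j_g(f(x))=j_h(x)$ by hand for that particular $h$, checks the $2^{i+1}$-periodicity consistency of the resulting $j_g$ using the $2^i$-periodicity of $j_h$, and concludes $g\in C^{i+1}\setminus C^{i}$ from the maximality of $x_0$. You instead package the whole computation into the homomorphism $\theta(g)=[g,f]$ on $P=\prod_x\langle(2x\ 2x+1)\rangle$: once you know $\theta$ is surjective (which holds with no consistency condition precisely because $f$ acts on $\mathbb{N}$ as a single infinite cycle $\cdots\to 3\to 1\to 0\to 2\to\cdots$, so the recursion never closes up), the identification $C^{i+1}=\theta^{-1}(C^{i})$ together with $\ker\theta=C^{1}$ of order $2$ gives $\lvert C^{i+1}\rvert=2\lvert C^{i}\rvert$ by the first isomorphism theorem, hence strictness. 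Your route buys more: it shows the index is exactly $2$ (so $\lvert C_{E_k(H)}^{i}(H)\rvert=2^{i}$) and eliminates the delicate even/odd periodicity bookkeeping that occupies most of the paper's proof; the paper's route avoids having to prove global surjectivity of $\theta$ and produces a concrete element of $C^{i+1}\setminus C^{i}$, which is in the spirit of the later counterexample construction. Two steps in your write-up deserve one more line each: the equivalence $[g,H]\subseteq C^{i}\Leftrightarrow[g,f]\in C^{i}$ for $g\in P$ requires expanding $[g,kf^{n}]=[g,f^{n}]$ (using $g\in C_G(K)$) and inducting on $n$ via the commutator identity $[g,f^{n+1}]=[g,f^{n}]\,[g,f]^{f^{n}}$ and the normality of $Z_i(E_k(H))$; and the surjectivity argument should state explicitly that every element of $P$ corresponds to an arbitrary function $j_g\colon\mathbb{N}\to\{0,1\}$ (the product is unrestricted), so the bi-infinite propagation always defines an element of $P$. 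Neither is a gap, just a matter of making the routine verifications visible.
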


\begin{proof}
We have shown in the proof of Proposition\textbf{\ }\ref{4.2}\textbf{\ }that
\[
C_{E_{k}\left(  H\right)  }\left(  H\right)  =\left\langle \underset{x\in%
\mathbb{N}
}{\prod}\left(  2x\text{ \ }2x+1\right)  \right\rangle .
\]
Thus, each $C_{E_{k}(H)}^{i}\left(  H\right)  $ is nontrivial for every $k\in%
\mathbb{N}
$ and $1\leq i\leq k+1.$ For every nontrivial $h\in C_{E_{k}(H)}^{i}\left(
H\right)  ,$ there is a first even (resp. odd) place $x_{0}$ such that
$j_{h}\left(  x_{0}\right)  =1$ and $j_{h}\left(  x\right)  =0$ for every even
$x$ (resp. odd) such that $x<x_{0}.$ We fix $h\in$ $C_{E_{k}(H)}^{i}\left(
H\right)  $ such that $x_{0}$ is maximal. Such an $h$ exists because
$C_{E_{k}(H)}^{i}\left(  H\right)  $ is finite \textbf{\ }by Proposition
\ref{4.2}. We will show that there exists $g\in$ $C_{E_{k}(H)}^{i+1}\left(
H\right)  \diagdown C_{E_{k}(H)}^{i}\left(  H\right)  $ such that $\left[
g,f\right]  =h.$ We will note $g=\underset{x\in%
\mathbb{N}
}{\prod}\left(  2x\text{ \ }2x+1\right)  ^{j_{g}\left(  x\right)  }.$ We first
analyze the case $x_{0}$ is even.

By equation \ref{AC3}\textbf{\ }in\textbf{\ }Proposition \ref{4.2}%
\textbf{\ }applied to even places, we set up the following system of equations
for $g$ and $h:$%
\begin{align*}
\text{\ }j_{g}\left(  0\right)  +j_{g}\left(  2\right)   & =j_{h}\left(
0\right)  =0\\
\vdots\text{ \ \ \ \ \ \ \ \ \ }  & =\text{ \ \ \ \ \ \ \ }\vdots\\
j_{g}\left(  x_{0}-2\right)  +j_{g}\left(  x_{0}\right)   & =j_{h}\left(
x_{0}-2\right)  =0\\
j_{g}\left(  x_{0}\right)  +j_{g}\left(  x_{0}+2\right)   & =j_{h}\left(
x_{0}\right)  =1\\
\vdots\text{ \ \ \ \ \ \ \ \ \ }  & =\text{ \ \ \ \ \ \ \ }\vdots\\
j_{g}\left(  2^{i+1}-2\right)  +j_{g}\left(  2^{i+1}\right)   & =j_{h}\left(
2^{i+1}-2\right)  .
\end{align*}
This system of equations is then repeated since the places of $g$ are
$2^{i+1}$-periodic (Proposition \ref{4.2}).

We define $g$ coherently with this periodicity condition. We set $j_{g}\left(
0\right)  =0.$ Then $j_{g}\left(  2a\right)  =0$ for $2a\leq x_{0}$ and
$j_{g}\left(  x_{0}+2\right)  =1.$ The values of $j_{g}\left(  2a\right)  $
for $x+2\leq2a\leq2^{i+1}$are then computed inductively using the equation
system. We have to show that $j_{g}\left(  0\right)  =j_{g}\left(
2^{i+1}\right)  .$ Summing up the two sides of the equation system we obtain
\[
j_{g}\left(  0\right)  +j_{g}\left(  2^{i+1}\right)  =\sum_{x=0}^{2^{i+1}%
-2}j_{h}\left(  x\right)  =0.
\]
The second equality follows from the $2^{i}$-periodicity of $j_{h}.$

It remains to define $j_{g}\left(  2a+1\right)  $ for $a\in%
\mathbb{N}
$ coherently with the $2^{i+1}$-periodicity of $j_{g}.$ From the equality
$j_{g}\left(  0\right)  +j_{g}\left(  1\right)  =j_{h}\left(  1\right)  $
(equation \ref{AC3}\textbf{), }one deduces the value of $j_{g}\left(
1\right)  $. The rest of the odd places of $j_{g}$ is inductively computed
using $j_{g}\left(  2a+3\right)  +j_{g}\left(  2a+1\right)  =j_{h}\left(
2a+3\right)  $ for $a\in%
\mathbb{N}
$ (equation \ref{AC3}\textbf{).} The values of the $j_{g}\left(  2a+1\right)
$ are $2^{i+1}$-periodic because the right hand side of the equation $2^{i}$-periodic.

When $x_{0}$ is odd we use the equation
\[
j_{g}\left(  0\right)  +j_{g}\left(  1\right)  =j_{h}\left(  1\right)
\]
in \ref{AC3} to set $j_{g}\left(  1\right)  =0$ and $j_{g}\left(  0\right)
=1$. After this initial step, the rest of the the values of $j_{g}$ are
determined coherently using the two recursive equalities of \ref{AC3}, in a
similar way to the case when $x_{0}$ is even.

Finally, we remark that $g\in$ $C_{E_{k}(H)}^{i+1}\left(  H\right)  \diagdown
C_{E_{k}(H)}^{i}\left(  H\right)  $. Indeed, by construction $\left[
g,f\right]  \in C_{E_{k}(H)}^{i}\left(  H\right)  $ and trivially $\left[
g,K\right]  =1$, hence $g\in C_{E_{k}(H)}^{i+1}\left(  H\right)  $. Moreover,
by the maximal choice of $x_{0},$ $g$ is not in $C_{E_{k}(H)}^{i}\left(
H\right)  .$
\end{proof}

\bigskip

\bigskip The two preceding propositions yield the following corollary:

\begin{corollary}
\label{4.6}The following relations hold between the iterated centralizers such
that $j<j^{^{\prime}},$ $k\leq k^{^{\prime}},$ $j\leq k,$ $j^{^{\prime}}\leq
k^{^{\prime}}$ for all natural numbers:

\begin{enumerate}
\item[(i)] $C_{E_{k}\left(  H\right)  }^{j}\left(  H\right)  =C_{E_{k^{\prime
}}\left(  H\right)  }^{j}\left(  H\right)  $

\item[(ii)] $C_{E_{k}\left(  H\right)  }^{j}\left(  H\right)  <C_{E_{k^{\prime
}}\left(  H\right)  }^{j^{\prime}}\left(  H\right)  .$
\end{enumerate}

\.{I}n particular, the chain $\left(  C_{E_{k}\left(  H\right)  }^{j}\left(
H\right)  \right)  _{\left(  k,j\right)  }$ is strictly increasing, the
indices $\left(  k,j\right)  $ being ordered lexicographically.
\end{corollary}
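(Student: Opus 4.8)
The plan is to read the corollary as two separate phenomena. Part (i) is a \emph{collapse in the outer index} $k$: it asserts that once the upper index $j$ does not exceed $k$, the iterated centralizer $C_{E_{k}(H)}^{j}(H)$ ceases to depend on $k$. Part (ii) then combines this collapse with the strict ascent of Proposition \ref{4.5}. The tools are Lemma \ref{2.6} (which identifies the low iterated centralizers with iterated centers), Lemma \ref{2.4} (which compares iterated centralizers computed in a group and in a subgroup), together with the containments $C_{E_{k}(H)}^{j}(H)\le\prod_{x\in\mathbb{N}}\langle(2x\ 2x+1)\rangle\le E_{k'}(H)$ supplied by Proposition \ref{4.2} and Corollary \ref{4.3}.

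For part (i) I would fix $j\le k\le k'$ and apply Lemma \ref{2.4} to the triple $H\le E_{k'}(H)\le E_{k}(H)$, where $E_{k'}(H)\le E_{k}(H)$ holds because the envelopes descend. By Lemma \ref{2.6} we have $C_{E_{k}(H)}^{i}(H)=Z_{i}(E_{k}(H))$ for all $i\le k$, so the hypothesis of Lemma \ref{2.4} is met with bound $k$, and part (ii) of that lemma yields $C_{E_{k'}(H)}^{j}(H)=Z_{j}(E_{k'}(H))=Z_{j}(E_{k}(H))\cap E_{k'}(H)$. The decisive step is to remove the intersection: since $Z_{j}(E_{k}(H))=C_{E_{k}(H)}^{j}(H)\le\prod_{x\in\mathbb{N}}\langle(2x\ 2x+1)\rangle\le E_{k'}(H)$ by Proposition \ref{4.2} and Corollary \ref{4.3}, the group $Z_{j}(E_{k}(H))$ already lies inside $E_{k'}(H)$, so $Z_{j}(E_{k}(H))\cap E_{k'}(H)=Z_{j}(E_{k}(H))$ and therefore $C_{E_{k'}(H)}^{j}(H)=C_{E_{k}(H)}^{j}(H)$. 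This is exactly the interior analogue of the preceding proposition $C_{E_{k+1}(H)}^{k+1}(H)=C_{E_{k}(H)}^{k+1}(H)$, the only difference being that one invokes part (ii) of Lemma \ref{2.4} in place of part (iii).

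For part (ii), assume $j<j'$, $k\le k'$, $j\le k$ and $j'\le k'$. I would first rewrite the left-hand side using part (i): since $j\le k\le k'$ one has $C_{E_{k}(H)}^{j}(H)=C_{E_{k'}(H)}^{j}(H)$, so it suffices to compare iterated centralizers computed inside the single group $E_{k'}(H)$. Because $j\le j'-1$, the ascending property of iterated centralizers gives $C_{E_{k'}(H)}^{j}(H)\le C_{E_{k'}(H)}^{j'-1}(H)$, and Proposition \ref{4.5} (applicable since $j'-1\le k'$) makes the final step strict, $C_{E_{k'}(H)}^{j'-1}(H)<C_{E_{k'}(H)}^{j'}(H)$. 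Chaining these inequalities gives $C_{E_{k}(H)}^{j}(H)<C_{E_{k'}(H)}^{j'}(H)$, which is (ii).

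Finally, the \emph{in particular} clause follows by combining the two parts: part (i) shows that $C_{E_{k}(H)}^{j}(H)$ depends only on $j$ for $k\ge j$, so the value is constant along the $k$-direction, while part (ii) shows it increases strictly whenever $j$ does; reading off the diagonal produces the infinite strictly ascending chain $C_{E_{0}(H)}^{0}(H)<C_{E_{1}(H)}^{1}(H)<C_{E_{2}(H)}^{2}(H)<\cdots$. The only genuine obstacle is part (i), and within it the single point that the intersection in Lemma \ref{2.4}(ii) collapses; this rests entirely on the containment $C_{E_{k}(H)}^{j}(H)\le\prod_{x\in\mathbb{N}}\langle(2x\ 2x+1)\rangle$, so without the structural description from Proposition \ref{4.2} one could not conclude that passing to the smaller envelope $E_{k'}(H)$ leaves the low iterated centralizers unchanged.
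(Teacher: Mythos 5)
Your proof is correct, and its overall shape coincides with the paper's: apply Lemma \ref{2.4} (via Lemma \ref{2.6}) to the triple $H\le E_{k'}(H)\le E_{k}(H)$ to get $C_{E_{k'}(H)}^{j}(H)=C_{E_{k}(H)}^{j}(H)\cap E_{k'}(H)$, then collapse the intersection by showing $C_{E_{k}(H)}^{j}(H)\le E_{k'}(H)$, and deduce (ii) from (i) together with Proposition \ref{4.5}. The one substantive difference lies in how you justify the containment $C_{E_{k}(H)}^{j}(H)\le E_{k'}(H)$, which you rightly single out as the crux. You derive it from the structural description of the iterated centralizers specific to this counterexample, namely $C_{E_{k}(H)}^{j}(H)\le\prod_{x\in\mathbb{N}}\langle(2x\ 2x+1)\rangle\le E_{k'}(H)$ via Proposition \ref{4.2} and Corollary \ref{4.3}. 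The paper instead argues generally: $C_{E_{k}(H)}^{j}(H)\le C_{E_{k}(H)}^{k}(H)=Z_{k}(E_{k}(H))\le Z_{k'}(E_{k'}(H))\le E_{k'}(H)$, using Corollary \ref{2.5}. Both are valid here, but the paper's route shows that conclusion (i) is a fact about $E_{k}$-chains of arbitrary subgroups of arbitrary groups, whereas yours is tied to the permutation-group construction of Section 4; this generality is worth preserving if you ever want to reuse the collapse of the lower iterated centralizers outside this example.
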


\begin{proof}
We know $C_{E_{k}\left(  H\right)  }^{j}\left(  H\right)  \leq C_{E_{k}\left(
H\right)  }^{k}\left(  H\right)  $ for $j\leq k$ by properties of iterated
centralizers. By Lemma \ref{2.6}\textbf{,} $C_{E_{k}\left(  H\right)  }%
^{k}\left(  H\right)  =Z_{k}\left(  E_{k}\left(  H\right)  \right)  $. Since
the inclusion $Z_{k}\left(  E_{k}\left(  H\right)  \right)  \leq Z_{k^{\prime
}}\left(  E_{k^{\prime}}\left(  H\right)  \right)  $ holds for $k\leq
k^{\prime}$ by Corollary \ref{2.5}, we get
\begin{equation}
C_{E_{k}\left(  H\right)  }^{j}\left(  H\right)  \leq C_{E_{k}\left(
H\right)  }^{k}\left(  H\right)  =Z_{k}\left(  E_{k}\left(  H\right)  \right)
\leq Z_{k^{\prime}}\left(  E_{k^{\prime}}\left(  H\right)  \right)  \leq
E_{k^{\prime}}\left(  H\right)  .\label{AC4}%
\end{equation}

Since $C_{E_{k}\left(  H\right)  }^{j}\left(  H\right)  =Z_{j}\left(
E_{k}\left(  H\right)  \right)  $ for $j\leq k,$ we can apply Lemma \ref{2.4}
to $H\leq E_{k^{\prime}}\left(  H\right)  \leq E_{k}\left(  H\right)  $. Thus
$C_{E_{k^{\prime}}\left(  H\right)  }^{j}\left(  H\right)  =C_{E_{k}\left(
H\right)  }^{j}\left(  H\right)  \cap E_{k^{\prime}}\left(  H\right)  $ for
all $j\leq k.$

Since we have already shown $C_{E_{k}\left(  H\right)  }^{j}\left(  H\right)
\leq E_{k^{\prime}}\left(  H\right)  ,$ we get $C_{E_{k}\left(  H\right)
}^{j}\left(  H\right)  =C_{E_{k^{\prime}}\left(  H\right)  }^{j}\left(
H\right)  $; \textit{(i)} follows. Conclusion \textit{(ii)} is a consequence
of Proposition\textbf{\ }\ref{4.5}\textbf{\ }and conclusion \textit{(i)}.
\end{proof}

\bigskip

\bigskip We start the final part of our proof that the $E_{k}\left(  H\right)
$ form an infinite decreasing chain. We first prove a technical lemma.

\begin{lemma}
\label{4.8}Let $k\in%
\mathbb{N}
$. Let $l\in%
\mathbb{N}
$ be minimal such that for every $g\in$ $C_{E_{k}}^{k+1}\left(  H\right)  $
and for every $x\in%
\mathbb{N}
$, $j_{g}\left(  x\right)  =j_{g}\left(  x+2^{l}\right)  $. Then
\[
G_{x,l}\leq E_{k}%
\]
where
\[
G_{x,l}=\left\langle \left(  2\left(  x+2^{l}i\right)  \text{ }2\left(
x+2^{l}i^{^{\prime}}\right)  \right)  \left(  \left(  2\left(  x+2^{l}%
i\right)  +1\right)  \left(  2\left(  x+2^{l}i^{^{\prime}}\right)  +1\right)
\right)  \mid i,i^{^{\prime}}\in%
\mathbb{N}
\right\rangle
\]
and $0\leq x<2^{l}$.
\end{lemma}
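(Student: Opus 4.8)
The plan is to reduce the statement to its generators and to control a single commutator. Write $w_{i,i'}$ for the generating involution
\[
w_{i,i'}=\left(2(x+2^{l}i)\ 2(x+2^{l}i')\right)\left(\left(2(x+2^{l}i)+1\right)\left(2(x+2^{l}i')+1\right)\right),
\]
and set $y_{1}=x+2^{l}i$, $y_{2}=x+2^{l}i'$, so that $y_{1}\equiv y_{2}\pmod{2^{l}}$. Since $E_{k}$ is a subgroup, it suffices to prove $w_{i,i'}\in E_{k}$ for all $i,i'$. Geometrically $w_{i,i'}$ is the parallel interchange of the two blocks $\{2y_{1},2y_{1}+1\}$ and $\{2y_{2},2y_{2}+1\}$; in particular it normalizes $K$ and fixes every other block pointwise.

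First I would record the controlling commutator inside the abelian group $\prod_{x}\langle(2x\ 2x+1)\rangle$, to which $C_{E_{m}(H)}^{j}(H)$ belongs by Proposition \ref{4.2} and Corollary \ref{4.3}. For $g=\prod_{y}(2y\ 2y+1)^{j_{g}(y)}$, conjugation by $w_{i,i'}$ merely interchanges the exponents at $y_{1}$ and $y_{2}$, so in additive ($j$-)notation
\[
[w_{i,i'},g]=\prod_{y}(2y\ 2y+1)^{e(y)},\qquad e(y_{1})=e(y_{2})=j_{g}(y_{1})+j_{g}(y_{2}),\ e(y)=0\text{ otherwise.}
\]
Hence $[w_{i,i'},g]=1$ exactly when $j_{g}(y_{1})=j_{g}(y_{2})$, and otherwise $[w_{i,i'},g]$ has support on only two blocks, i.e. finite support.

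The second step converts this into membership. If $g$ is $2^{m}$-periodic and $m\le l$, then $2^{m}\mid 2^{l}\mid(y_{1}-y_{2})$, whence $j_{g}(y_{1})=j_{g}(y_{2})$ and $[w_{i,i'},g]=1$. I would then prove $w_{i,i'}\in E_{m}(H)$ for every $m\le k$ by induction on $m$: the case $m=0$ is trivial since $E_{0}(H)=G$, and assuming $w_{i,i'}\in E_{m-1}(H)$, Proposition \ref{4.2} shows every element of $C_{E_{m-1}(H)}^{m}(H)$ is $2^{m}$-periodic, so for $m\le l$ the displayed computation yields $[w_{i,i'},C_{E_{m-1}(H)}^{m}(H)]=1\le C_{E_{m-1}(H)}^{m-1}(H)$, which is precisely the defining condition placing $w_{i,i'}$ in $E_{m}(H)$. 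Running this to $m=k$ gives $w_{i,i'}\in E_{k}(H)$, and therefore $G_{x,l}\le E_{k}$.

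The main obstacle is ensuring the induction actually reaches level $k$: the passage into $E_{m}(H)$ used $m\le l$, so the whole argument requires $k\le l$. Proposition \ref{4.2} gives the easy bound $l\le k+1$; the content is the reverse inequality $l\ge k$. Crude subgroup counting inside $\prod_{x}\langle(2x\ 2x+1)\rangle$ does not deliver this for large $k$, so I would instead track periods through Proposition \ref{4.5}: at each step $C_{E_{k}(H)}^{i}(H)<C_{E_{k}(H)}^{i+1}(H)$ with $i\ge 1$ one builds $g$ with $[g,f]=h$ for an $h\neq 1$ of maximal first nonzero place, and the recurrences $j_{g}(2a)+j_{g}(2a+2)=j_{h}(2a)$ with their odd analogue force the minimal period of $g$ to double from $2^{i}$ to $2^{i+1}$; iterating to $i=k$ exhibits an element of $C_{E_{k}(H)}^{k+1}(H)$ of minimal period $2^{k+1}$, so $l=k+1$ when $k\ge 1$ (the case $k=0$ being trivial as $E_{0}(H)=G$). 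The bound $m\le l$ is genuinely necessary, not merely convenient: if $m>l$ and some $2^{m}$-periodic $g\in C_{E_{m-1}(H)}^{m}(H)$ had $j_{g}(y_{1})\neq j_{g}(y_{2})$, then $[w_{i,i'},g]$ would be a nontrivial finite-support element, which cannot lie in $C_{E_{m-1}(H)}^{m-1}(H)$ since all its nontrivial elements have infinite support (Proposition \ref{4.2}).
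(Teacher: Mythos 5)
Your reduction to the generators $w_{i,i'}$, the commutator computation (conjugation by $w_{i,i'}$ swaps the exponents at $y_{1}$ and $y_{2}$, so $[w_{i,i'},g]=1$ exactly when $j_{g}(y_{1})=j_{g}(y_{2})$), and the plan of climbing the chain $E_{0}\geq E_{1}\geq\cdots\geq E_{k}$ one level at a time are exactly the mechanism of the paper's proof. The gap is in how you certify $[w_{i,i'},C_{E_{m-1}}^{m}(H)]=1$ at level $m$: you invoke the worst-case bound of Proposition \ref{4.2}, namely that elements of $C_{E_{m-1}}^{m}(H)$ are $2^{m}$-periodic, and then need $2^{m}\mid 2^{l}$, i.e.\ $m\leq l$, hence $k\leq l$ for the induction to reach level $k$. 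You correctly identify this as the crux, but your argument that $l\geq k$ (indeed $l=k+1$) is not valid, and the inequality itself fails. The recursion $j_{g}(f(x))=j_{g}(x)+j_{h}(x)$ doubles the minimal period of $g$ relative to that of $h$ only when the sum of $j_{h}$ over a half-period is odd --- this is precisely case (b) of the dichotomy in the proof of Proposition \ref{4.2}; in case (a) the period does not grow, and nothing in Proposition \ref{4.5} guarantees that case (b) can always be arranged. Computing the first few iterated centralizers explicitly (each $C_{E_{k}}^{i+1}(H)$ is the preimage of $C_{E_{k}}^{i}(H)$ under the homomorphism $g\mapsto[g,f]$ on $\prod_{x}\langle(2x\ 2x+1)\rangle$, so the chain has orders $2,4,8,\dots$), one finds that $C_{E_{3}}^{4}(H)$ consists entirely of $2^{3}$-periodic elements, so $l=3$ when $k=3$, refuting $l=k+1$; continuing, $l=4$ for both $k=4$ and $k=5$, so at $k=5$ one has $l<k$ and your induction cannot pass level $m=5$.

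The repair is small and is in effect what the paper does. By the unnumbered proposition preceding Proposition \ref{4.5} together with Corollary \ref{4.6}, $C_{E_{m-1}}^{m}(H)=C_{E_{m}}^{m}(H)\leq C_{E_{k}}^{k+1}(H)$ for every $1\leq m\leq k$. Hence every element $g$ you meet in the inductive step already lies in $C_{E_{k}}^{k+1}(H)$ and is therefore $2^{l}$-periodic by the very definition of $l$; since $y_{1}\equiv y_{2}\pmod{2^{l}}$, your commutator computation gives $[w_{i,i'},g]=1$ with no constraint relating $m$ to $l$, and the induction reaches $E_{k}$. (The paper packages the same idea as an induction on the level itself, letting $l=l_{k}$ vary with $k$ and using only the monotonicity $l_{k}\leq l_{k+1}$, which follows from $C_{E_{k}}^{k+1}(H)\leq C_{E_{k+1}}^{k+2}(H)$.) With that substitution your argument is correct; as written, the passage from $E_{m-1}$ to $E_{m}$ is unjustified, and genuinely false to justify by your method, once $m$ exceeds $l$.
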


\begin{proof}
We proceed by induction on $k$. For $k=0$, the conlusion is trivial. We
inductively assume that the claim holds for $k.$ In particular, $G_{x,l}\leq
E_{k}.$ It is known from Proposition\textbf{\ }\ref{4.2} that for every $h\in
C_{E_{k+1}\left(  H\right)  }^{k+2}\left(  H\right)  $ there exist $l^{\prime
}\leq k+2$ such that for every $0\leq x<2^{l^{\prime}},$ $j_{h}\left(
x\right)  =j_{h}\left(  x+2^{l^{\prime}}\right)  .$ We want to show the
inclusion $G_{x,l^{^{\prime}}}\leq E_{k+1}$. Let $g\in G_{x,l^{^{\prime}}}$ .
In order to verify $g\in E_{k+1}$, there are two conditions to check:

\begin{description}
\item[a] $g\in E_{k}$,

\item[b] $\left[  g,C_{E_{k}}^{k+1}\left(  H\right)  \right]  \leq
C_{E_{k}(H)}^{k}\left(  H\right)  $.

\item[a] By induction, $G_{x,l}\leq E_{k}$ for $0\leq x<2^{l}$. Since by
Corollary \ref{4.6}\textbf{\ }$C_{E_{k}(H)}^{k+1}\left(  H\right)  \leq
C_{E_{k+1}(H)}^{k+2}\left(  H\right)  ,$ we have $l\leq l^{\prime}.$ Therefore
$2^{l}$ divides $2^{l^{\prime}}$ and we get $G_{x,l^{^{\prime}}}\leq
G_{x,l}\leq E_{k}$ for $0\leq x<2^{l^{\prime}}.$

\item[b] By definition, $g\in G_{x,l^{^{\prime}}}$ for a certain $x$ such
that\ $0\leq x<2^{l^{\prime}}$. Since $2^{l}$ divides $2^{l^{\prime}}$ ,
$\left[  g,C_{E_{k}(H)}^{k+1}\left(  H\right)  \right]  =1$. Hence $\left[
g,C_{E_{k}(H)}^{k+1}\left(  H\right)  \right]  \subseteq C_{E_{k}(H)}%
^{k}\left(  H\right)  $.
\end{description}
\end{proof}

\bigskip

\begin{teo}
For every $k\in%
\mathbb{N}
$, there is at least one $k^{^{\prime}}>k$ such that $E_{k^{\prime}+1}%
<E_{k+1}$.
\end{teo}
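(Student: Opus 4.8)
The plan is to establish the equivalent statement that the chain $(E_{k}(H))_{k}$ never stabilizes, arguing by contradiction. If the assertion failed for some $k_{0}$, then, since the $E_{k}(H)$ descend, we would have $E_{k_{0}+1}(H)=E_{k_{0}+2}(H)=\cdots=:E_{\infty}$; writing $N=k_{0}+1$ it suffices to contradict the hypothesis that $E_{k}(H)=E_{\infty}$ for all $k\geq N$. Put $A_{k}:=C_{E_{k}(H)}^{k+1}(H)$. First I would collect three facts. By Proposition \ref{4.2} each $A_{k}$ is a finite subgroup of $\prod_{x\in\mathbb{N}}\langle(2x\ 2x+1)\rangle$ all of whose nontrivial elements have infinite support; moreover the proposition giving $C_{E_{k+1}(H)}^{k+1}(H)=C_{E_{k}(H)}^{k+1}(H)$ together with Proposition \ref{4.5} yields $A_{k}<A_{k+1}$, so the orders $|A_{k}|$ are unbounded. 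Next, each $A_{k}$ is an iterated centralizer of $H$ and is therefore normalized by $f\in H$; since conjugation by $f$ sends the element with data $j_{h}$ to the one with data $j_{h}\circ f^{-1}$, and $f$ permutes $\mathbb{N}$ in the single bi-infinite orbit $\ldots,5,3,1,0,2,4,\ldots$, this conjugation is precisely the shift along that orbit. Finally, by Lemma \ref{4.8} the subgroup $G_{0,l}$, where $2^{l}$ is the period of $C_{E_{N}(H)}^{N+1}(H)$, is contained in $E_{N}=E_{\infty}$; in particular the generator $g\in G_{0,l}$ interchanging the two blocks $b_{1}=0$ and $b_{2}=2^{l}$ lies in $E_{\infty}$.

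The heart of the argument is a separation property for $A:=\bigcup_{k}A_{k}$: I claim there exists $h\in A$ with $j_{h}(b_{1})\neq j_{h}(b_{2})$. Suppose not, so that every $h\in A$ takes equal values on $b_{1}$ and $b_{2}$. Applying the $f$-invariance to all conjugates $f^{t}hf^{-t}\in A$ would then give $j_{h}(f^{-t}(b_{1}))=j_{h}(f^{-t}(b_{2}))$ for every $t$, which forces each $h$ to be periodic with period dividing the orbit-distance $d$ between $b_{1}$ and $b_{2}$. But the $j$-data of period dividing $d$ range over a finite set, so such elements form a finite group, contradicting the unboundedness of $|A_{k}|$. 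Hence the desired $h$ exists, and it lies in some $A_{k'}$ with $k'\geq N$.

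I would then finish by an explicit commutator computation. Since $g$ merely interchanges the blocks $b_{1},b_{2}$ and $j_{h}(b_{1})\neq j_{h}(b_{2})$, a direct calculation shows that $[g,h]$ is supported on $b_{1},b_{2}$ with exponent $j_{h}(b_{1})+j_{h}(b_{2})=1$, so that $[g,h]=(2b_{1}\ 2b_{1}+1)(2b_{2}\ 2b_{2}+1)$ is a nontrivial element of finite support. Because $E_{\infty}=E_{k'+1}(H)$ and $g\in E_{\infty}=E_{k'}(H)$, the defining relation of $E_{k'+1}(H)$ requires $[g,A_{k'}]\leq C_{E_{k'}(H)}^{k'}(H)$. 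By Proposition \ref{4.2}, however, the nonidentity elements of $C_{E_{k'}(H)}^{k'}(H)$ all have infinite support, so the finite-support element $[g,h]$ cannot belong to it. This contradiction shows that no stabilization occurs, which is exactly the assertion of the theorem.

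I expect the separation property of the second paragraph to be the main obstacle. The mere strict growth of the iterated centralizers (Corollary \ref{4.6}) is not in itself enough; one has to convert the unboundedness of $|A_{k}|$ into the ability to tell two prescribed blocks apart, and this is precisely where their $H$-invariance (equivalently, shift-invariance along the single $f$-orbit) becomes indispensable. Matching the block-interchanging element supplied by Lemma \ref{4.8} against a centralizer element that genuinely distinguishes those blocks is then what produces the forbidden finite-support commutator.
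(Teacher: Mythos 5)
Your argument is correct, and its skeleton coincides with the paper's: both use Lemma \ref{4.8} to place a block-interchanging permutation $g$ in a deep envelope, then pair it against an element $h$ of a sufficiently late iterated centralizer whose exponent data distinguishes the two interchanged blocks, so that $[g,h]$ is a nontrivial permutation of finite support and therefore, by Proposition \ref{4.2}, cannot lie in $C_{E_{k'}(H)}^{k'}(H)$. The genuine divergence is in how the separating $h$ is produced. The paper lets the block $x_{0}$ vary: since the iterated centralizers are finite and strictly increasing (Corollary \ref{4.6}) while the $2^{l}$-periodic elements of $\prod_{x\in\mathbb{N}}\langle (2x\ 2x+1)\rangle$ form a finite set, some later centralizer must contain a non-$2^{l}$-periodic element, and one takes whichever pair of blocks at distance $2^{l}$ it distinguishes. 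You instead fix the pair $(0,2^{l})$ in advance and invoke the normalization of the iterated centralizers by $f\in H$, under which conjugation is the shift along the single $f$-orbit, so that failure of separation at one pair propagates to every pair at the same orbit-distance and again forces finiteness. Your route costs an extra ingredient the paper never needs (the $H$-invariance of iterated centralizers, which you should justify in a line, e.g.\ by induction on the level), but it buys an explicit choice of blocks; in particular it sidesteps the paper's unjustified (though inessential, since $G_{x,l}$ contains the swap of any two blocks congruent modulo $2^{l}$) claim that $x_{0}$ may be taken in $\{0,\dots,2^{l}-1\}$. Your global framing as a contradiction to stabilization is equivalent to the stated quantifier form and changes nothing of substance.
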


\begin{proof}
We start by fixing $k\in%
\mathbb{N}
.$ Let $x,l$ and $G_{x,l}\leq E_{k+1}$ be as in Lemma \ref{4.8}. By
Proposition \ref{4.2}, for every $h\in C_{E_{k}}^{k+1}\left(  H\right)  $,
$j_{h}\left(  x\right)  =j_{h}\left(  x+2^{l}\right)  $. Since $C_{E_{k}}%
^{i}\left(  H\right)  $ is finite and by Corollary \ref{4.6}\textbf{\ }the
chain of such iterated centralizers is strictly increasing, there exist
$k^{\prime}>k$ , $h\in C_{E_{k^{\prime}}(H)}^{k^{\prime}+1}\left(  H\right)  $
and $x_{0}$ , $0\leq x_{0}<2^{l},$ such that $j_{h}\left(  x_{0}\right)  \neq
j_{h}\left(  x_{0}+2^{l}\right)  $. We define $g$ to be $(2x_{0}$
$2(x_{0}+2^{l}))(2x_{0}+1$ $2(x_{0}+2^{l})+1)$. By Lemma \ref{4.8} $g\in
E_{k}$. In fact, $g\in E_{k+1}\left(  H\right)  $ since $\left[
g,C_{E_{k}(H)}^{k+1}\left(  H\right)  \right]  =1\subseteq C_{E_{k}(H)}%
^{k}\left(  H\right)  $. But $g\notin E_{k^{\prime}+1}$. Indeed, one computes
\begin{align*}
\left[  g,h\right]   & =h^{g}h\\
& =\left(  2x_{0}\text{ }2x_{0}+1\right)  \left(  2\left(  x_{0}+2^{l}\right)
\text{\ }2\left(  x_{0}+2^{l}\right)  +1\right)  \left(  \underset{i\neq
x_{0},\text{\ }x_{0}+2^{l}}{\prod}\left(  2i\text{ }2i+1\right)
^{j_{h}\left(  i\right)  }\right)  ^{2}\\
& =\left(  2x_{0}\text{ \ }2x_{0}+1\right)  \text{ }\left(  2\left(
x_{0}+2^{l}\right)  \text{ \ }2\left(  x_{0}+2^{l}\right)  +1\right)  .
\end{align*}
for $i\in%
\mathbb{N}
.$ Since $\left[  g,h\right]  \neq1$ and is of finite support, $\left[
g,h\right]  \notin C_{E_{k^{\prime}}(H)}^{k^{\prime}}\left(  H\right)  $.
Hence $g\notin E_{k^{\prime}+1}(H)$.
\end{proof}

\section{\bigskip Concluding Remarks}

Our final theorem shows that the subgroup $H\leq Sym\left(
\mathbb{N}
\right)  $ is a counterexample to the stabilization problem. Indeed, in the
proof of the main theorem $k$ was arbitrary, and as a result, the descending
chain $\left(  E_{k}\left(  H\right)  \right)  _{k}$ is infinite. We recall
that $Sym\left(
\mathbb{N}
\right)  $ is not an $\mathfrak{M}_{C}$-group. Whether the stabilization
problem has an affirmative answer in groups satisfiying the chain condition on
centralizers remains open.

\section{\bigskip Acknowledgements}

The author is indebted to her Ph.D. supervisors Erdal Karaduman and Tuna Alt\i
nel for their continuous support and infinite patience. The author would like
to thank to the referee for valuable comments and suggestions that improved
the presentation of the paper.


\end{document}